\documentclass[11pt]{article}
\usepackage{enumerate, fullpage}
\usepackage{graphicx}
\usepackage{wrapfig}
\usepackage{multirow}
\usepackage{algorithm}
\usepackage{algpseudocode}
\usepackage{color}
\usepackage{amsmath}
\usepackage{amsfonts}
\usepackage{amssymb}
\usepackage{mathrsfs}
\usepackage{eucal,bm}
\usepackage{lscape}
\usepackage{mathptmx}
\definecolor{darkgreen}{RGB}{0, 102, 0}
\definecolor{IKB}{RGB}{0, 47, 167}
\usepackage[pdftex,bookmarks,bookmarksopen=true,pdfstartview={FitH},bookmarksnumbered=true,colorlinks=true,citecolor=darkgreen,linkcolor=IKB,plainpages=false,pdfpagelabels]{hyperref}

\def\bR{\mathbf{R}}

\def\cF{{\cal F}}
\def\myB{{\cal B}}

\newcommand{\R}{\mathbb{R}}

\newcommand{\beq}{\begin{equation}}
\newcommand{\eeq}{\end{equation}}
 
\newcommand{\beqnr}{\begin{eqnarray}}
\newcommand{\eeqnr}{\end{eqnarray}}

\newcommand{\benum}{\begin{enumerate}}
\newcommand{\eenum}{\end{enumerate}}
\newcommand{\argmax}{\mathop{\rm argmax}}
\newcommand{\argmin}{\mathop{\rm argmin}}

\newcommand{\cA}{{\cal A}}

\newcommand{\cC}{{\cal C}}

\newcommand{\cI}{{\mathcal{I}}}

\newcommand{\cS}{{\cal S}}

\newcommand{\cZ}{{\cal Z}}

\newtheorem{DE}{Definition}[section]

\newtheorem{LE}[DE]{Lemma}
\newtheorem{EX}[DE]{Example}

\newtheorem{OB}[DE]{Observation}
\newtheorem{THM}[DE]{Theorem}
\newtheorem{CO}[DE]{Corollary}

\newcommand{\qed}{\mbox{}\hspace*{\fill}\nolinebreak\mbox{$\rule{0.7em}{0.7em}$}}

\newenvironment{PROOF}{\noindent {\sc Proof:}}{\(\qed\)\par}

\advance \topmargin by -\headheight
\advance \topmargin by -\headsep

\evensidemargin \oddsidemargin

\usepackage{prelim2e}

\begin{document}

\title{De-risking solutions to optimization problems\footnote{Funded by AFOSR and ONR.}}%\thanks{Supported by organization x.}}
% 
% If the paper title is too long for the running head, you can set
% an abbreviated paper title here
%

\author{Daniel Bienstock and Blake Sisson, Columbia University}
% 
% First names are abbreviated in the running head.
% If there are more than two authors, 'et al.' is used.
% 

%
\maketitle              % typeset the header of the contribution

\begin{abstract} 
  We develop a cutting-plane methodology that adjusts solutions to optimization problems so 
  as to reduce features that bring about exposure to risk, such as concentration of assets
  or resources.  The methodology is agnostic to the representation of risk but has provably good attributes. Our procedure aims
  to reduce the appropriate risk metric without accruing a significant increase in nominal 
  cost, rapidly, or proves that such an adjustment is not possible.  The underlying approach
  borrows from techniques used in first-order methods for optimization.
\end{abstract}

\section{Introduction}\label{sec:Introduction}
Consider a generic optimization problem 
\begin{align}
& \min \, c(x), \ \text{s.t.} \ x \in P. \label{eq:generic}
\end{align}
An empirical fact which is found in many real-world examples is that optimal or near-optimal solutions end up, inadvertently, incorporating high \emph{concentration}.  For example, in a logistical setting, a solution may place a large percentage of high-value items in a small set of locations during a narrow time span, causing exposure to many different types of actual risk. In a sense, concentration acts as an enabler for exogenous risk. This notion of concentration risk is well-known in
multiple industrial settings. Notably, a concrete algebraic representation of concentration suitable for
optimization may be very high dimensional.

In this paper we will assume that concentration (or, when appropriate,  risk) is approximately quantified through a function $\Phi(x)$.  To fix ideas
we will refer to $\Phi(x)$ as the \emph{impact} function.
The algorithms discussed in this paper efficiently probe the frontier defined by cost versus impact, in order to rapidly de-risk an optimal solution $x^*$ to \eqref{eq:generic} by either
\begin{itemize}
    \item [(a)] computing an alternative 
vector $\hat x \in P$ with \textit{significantly lower impact but moderate increase in cost} relative to $x^*$, or
    \item [(b)] proving that no such vector exists.
\end{itemize}
  Thus, in simple terms, we develop computational tools that support an \textit{optimistic} risk-tolerance stance, when possible, and that yield at least a partial explanation (i.e., an impossibility proof) when not achievable.

\subsection{Motivation}\label{subsec:motivation}
There is a large literature pointing to undesirability of concentration arising from optimization in varied settings such as finance and logistics. See, e.g., \cite{randconcentration}\footnote{In particular see page 13, where high dimensionality and opacity of risk are brought up.} %"For example, having
%diversified supply sources for a particular part of the supply chain might not mean much if the
%disruption is from somewhere unexpected. Companies do not know what they do not know."}. 
Here we present an example to fix ideas.

The plot in Figure \ref{fig:log1} was produced by solving a logistical MIP that
models shipping a variety of commodities using several vehicle types on a 
medium-sized network in a multiperiod setting.  The model is endowed with 
several types of capacities, vehicle availabilities, deadlines and vehichle-commodity compatibility rules. The particular case considered here has over 400,000 variables of which over 5,000 are integral.  

Figure 1 displays sorted
\textit{activity} weights -- an activity is the set of all shipments
on a given network link at a given point in time, over all available vehicles. Even though there are over 10,000 such activities, only approximately 150 are nonzero at the optimum.  More significantly, we see very high concentration near the top. 
\begin{figure}[H]
  \centering
  \resizebox{\textwidth}{!}{
  \begin{minipage}[b]{0.49\textwidth}
    \includegraphics[scale=0.48]{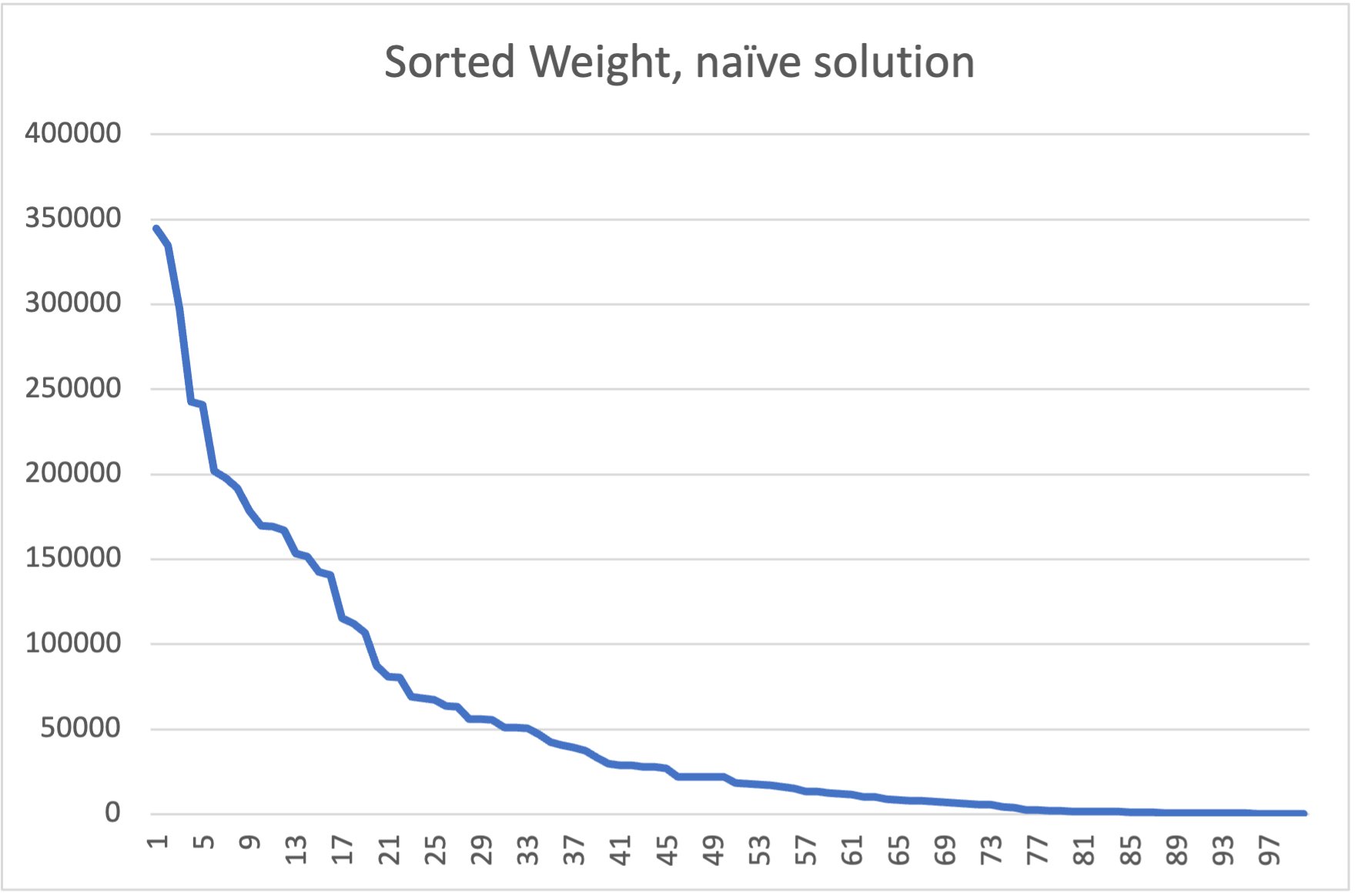}
    \caption{Sorted activity weights.}\label{fig:log1}
  \end{minipage}
  \hfill
  \begin{minipage}[b]{0.49\textwidth}
    \includegraphics[scale=0.48]{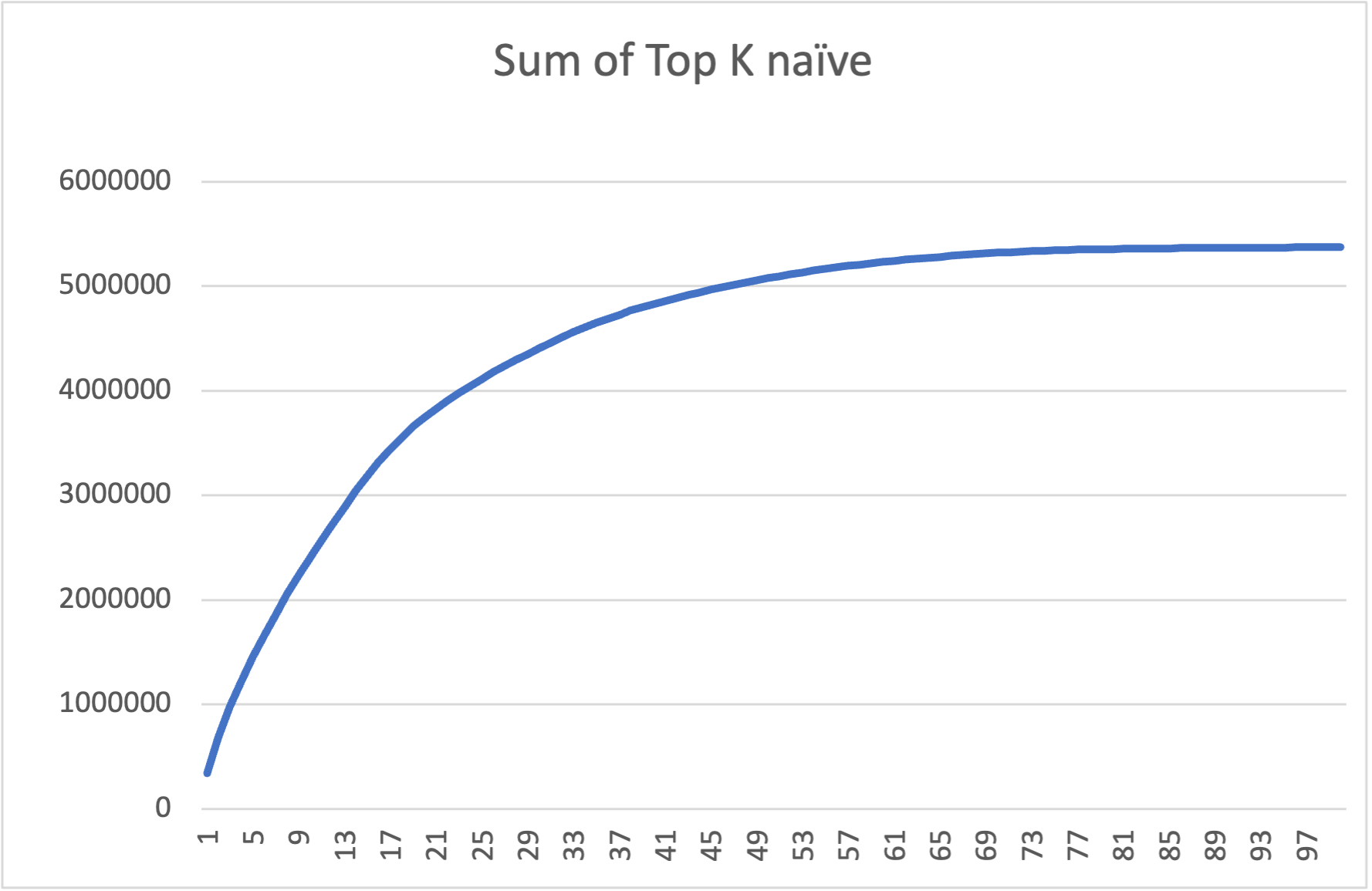}
    \caption{Aggregate weight of top activities.}\label{fig:log2}
  \end{minipage}
  }
\end{figure}   
Figure \ref{fig:log2}  plots the aggregate weight of the top $K$ activities, for $K = 1, 2, \ldots$ 
Note that, for example, the top ten activities   account for approximately $40 \%$ of all shipped weight.
Such concentration is risk-inducing in an intuitive fashion: congestion or an adversarial event that affects a small number of  activities will have a negative 
impact out of proportion to the value of the concentration.

As a simple example of the computational challenges inherent in addressing concentration risk, consider a logistical setting where there are decision variables of the form $x_{j,t}$, modeling aggregate commodity levels present at location $j$ at time $t$. Suppose we define
\begin{align} \label{eq:minexample}
\Phi(x) \, \doteq & \, \max_{H \in \cI} \sum_{t \in H} \sum_{j \in \cS_H} x_{j,t}
\end{align}
where $\cI$ is a family of short time intervals  and for each interval $H$, $\cS_H$ is a set of locations, possibly dependent on $x$ itself. An example is provided by the "top $K$" setup ($K = 2, 3, \ldots$, small) where for $H \in \cI$, $\cS_H$ is the set of $K$ highest values $x_{j,t}$ among all locations $j$ and all time periods $t$ within $H$. In that case, \eqref{eq:minexample} takes the form
\begin{align} \label{eq:minordered}
\Phi(x) \, \doteq & \, \max_{H \in \cI} \sum_{t \in H} \sum_{k = 1}^K  x_{(k),t}
\end{align}
where for $1 \le k \le K$, $x_{(k),t}$ is the $k^{th}$ largest value $x_{j,t}$ with $t \in H$.  Thus, $\Phi(x)$ is large when there is high concentration of commodities in a narrow time frame at a small set of locations. A computational challenge arises in that an explicit representation of $\Phi(x)$ will require a high dimensional formulation.  Additionally, rather than using an aggregate quantity $x_{j,t}$ it may prove useful to refer to specific commodities, further increasing dimensionality. Incorporating a model of exogenous uncertainty into $\Phi(x)$ will further extend the dimensionality.

For a very different example consider an optimization problem
\begin{align} \label{eq:nonlineargeneric}
\min & \ f_0(x) \quad \text{s.t.} \   \sum_{j \in J_i} a_{ij} f_{ij}(x) \ \leq \ b_i, \ \forall i \in I, \ x \in \bR^n, 
\end{align}
%\begin{subequations} \label{eq:nonlineargeneric}
%\begin{align}
 % \min & \ f_0(x) \nonumber \\
  %\text{s.t.} & \ \sum_{j \in J_i} a_{ij} f_{ij}(x) \ = \ b_i, \quad i \in I \nonumber \\
 % & \ x \in \bR^n, \nonumber
%\end{align}
%\end{subequations}
\noindent where $a_{ij} \neq 0$ and $f_{ij}(x)$ is a given function (e.g., a monomial) for
each $i \in I$ and $j \in J_i$.  Suppose the $a_{ij}$ are uncertain, under a model where $a_{ij}$ is replaced by $a_{ij}(1 + z_{ij})$ with $z$ is chosen from within some set $\cZ$.  We can thus define 
\begin{align} \label{eq:maxinfeas}
\Phi(x) \, \doteq & \, \max_{z \in \cZ} \max_{i\in I} \left\|\left[ \sum_{j \in J_i} a_{ij}f_{ij}(x) (1+z_{ij}) - b_i \right]^+ \right\|_p
\end{align}
for some $p \ge 0$ as a measure of the adversarial infeasibility incurred by
$x$. 
%Note that for a given $x$, in general $\Phi(x) > 0$ if $\cZ$ allows for some $z_{ij} f_{ij}(x) \neq 0$ because \eqref{eq:nonlineargeneric} is equality constrained.  %An explicit description of $\Phi(x)$ suitable for optimization is likely to be both high-dimensional and complex especially when the $f_{ij}(x)$ are nonlinear: for example, we
%expect that computing $\min_{x \in \bR^n} \Phi(x)$ will %in general be a difficult problem.      
Our algorithms seek to compute $x$ feasible and near-optimal for \eqref{eq:nonlineargeneric} that achieves (much) smaller $\Phi(x)$, i.e., much smaller maximum infeasibility error than a nominally optimal solution to \eqref{eq:nonlineargeneric}.
\subsubsection{Discussion of a disadvantageous approach}
Our de-risking goal can, in principle, be addressed by solving impact-constrained formulations   
\begin{align} \label{eq:generic-adversarial}
x_{\Lambda} \ \doteq \ \argmin  \{ \, c(x) \, : \, \, x \in P, \ \ \Phi(x) \, \le \, \Lambda \, \}, 
\end{align} 
where $\Lambda$ is an (explicit) input value.  We next argue that this is an unfavorable methodology. \\

\noindent {\bf 1. Nontrivial selection for $\Lambda$.}  Problem \eqref{eq:generic} will typically be nonlinear, nonconvex, or both, and  $\Phi(x)$ will be nonlinear.
%A priori, it may  not be clear what value $\Lambda$ should take, and why. %Any choice for $\Lambda$ reflects a risk-tolerance stance, and, ideally, one should estimate an efficient frontier by solving \eqref{eq:generic-adversarial} for multiple values $\Lambda$. Additionally, t
Hence  $ x_\Lambda$   and   $c(x_\Lambda)$ may both be sharply sensitive to  $\Lambda$, an undesirable behavior.    An additional hazard arises from a computational standpoint: a choice that is aggressive --$\Lambda$ is too small-- would cause problem \eqref{eq:generic-adversarial} to become infeasible or nearly so \footnote{In fact the computation of
the smallest value $\Lambda$ such that \eqref{eq:generic-adversarial} remains feasible can be a nontrivial problem even in cases where \eqref{eq:generic} is a linear: it includes the min-cut problem as a special case.}.  In general, especially in the nonlinear and nonconvex case, a specific choice for $\Lambda$ can give rise to solution times that are unacceptably lengthy.  In fact the solver might simply fail, while, potentially, a slightly different choice yields a much faster solution.  \\

\noindent {\bf 2. An overly complex and large formulation.} As highlighted in the examples underlying equations \eqref{eq:minordered} and \eqref{eq:nonlineargeneric}, the explicit representation of $\Phi(x)$ in \eqref{eq:generic-adversarial} might require a formulation of  unacceptably large size, causing problem
\eqref{eq:generic-adversarial} to become far more challenging than \eqref{eq:generic}. In both cases 
$\Phi(x)$ is of the form $\max_{i \in I} \phi_i(x)$ where $I$ is a very large set and the $\phi_i(x)$ functions are nonlinear and require a description which is, itself, nontrivial, even in a purely deterministic setting.  

Another nominally equivalent approach resorts to solving
\begin{align}
& \min \Phi(x), \ \text{s.t.} \ x \in P, \ \ c(x) \, \le \, \Gamma,
\end{align}
for an input $\Gamma$.  The challenge here is that formal, constrained minimization of the (in general) very complex function $\Phi(x)$ will likely be difficult.  The approach is possibly also excessively conservative and inflexible with regards to the choice of $\Gamma$\footnote{That is to say, a slightly different choice for $\Gamma$ might yield faster solution times and nearly equal risk.}. 
\subsubsection{Our approach}
We develop cutting-plane algorithms that aim to parsimoniously de-risk an optimal solution $x^*$ to the nominal problem \eqref{eq:generic}, i.e., to compute $\hat x \in P$ with  small increase in cost together with large decrease in impact, both relative to $x^*$, or to prove that this goal is not possible.  While such  de-risking  cannot be guaranteed (because of structure of the feasible set $P$), experiments indicate that it
    is frequently achievable in relevant cases. 

Our procedures  are motivated by the impact-weighted formulation 
\begin{align}\label{eq:generic-weighted}
\min & \ \ c(x) \ + \Theta \, \Phi(x), \quad \text{s.t.} \ \  x \in P,
\end{align} 
where $\Theta > 0$ is a risk-aversion parameter\footnote{The classical application of this idea is found in \cite{markowitz}.}. Of note, in a typical run the procedures will terminate \textit{without}  solving \eqref{eq:generic-weighted} to proved optimality. 

In particular, the cutting-plane Algorithm \ref{alg:SOFTMAX-adv}, Theorem \ref{thm:finiteconv} and Lemma \ref{le:formaltheta} yield the following.  Let $x^* \in P$, $0 < \lambda^{\text{lo}} < \lambda^{\text{hi}} \le 1$ and $0 < \xi$.  Under fairly general conditions, in a finite number of iterations of Algorithm \ref{alg:SOFTMAX-adv}, we either 
\begin{itemize}
    \item[(i)] obtain $\hat x \in P$ with $\Phi(\hat x) \le \lambda^{\mathrm{hi}} \Phi(x^*) \ \text{and} \ c(\hat x) \, \le \, c(x^*)\left(1 + \frac{\lambda^{\mathrm{hi}}}{\lambda^{\mathrm{hi}} - \lambda^{\mathrm{lo}}} \xi \right)$, or 
    \item [(ii)] prove (within tolerances) that there is \textit{no} $x \in P \ \text{with} \ \Phi(x) \le \lambda^{\mathrm{lo}} \Phi(x^*) \ \text{and} \ c(x) \le (1 + \xi) c(x^*).$ 
\end{itemize}
The value $\Theta$ that yields (i) and (ii) is chosen as a function of all the inputs listed above. Note that to a partial extent, any choice for $\Theta$ in \eqref{eq:generic-weighted} implies an equivalent 
choice for $\Lambda$ in \eqref{eq:generic-adversarial} -- however, this equivalence only applies to formally optimal solutions to the respective problems, a feature bypassed in our approach.

In the above descriptions, the impact function $\Phi(x)$ is presented in abstract form. Potentially, this includes an adversarial ingredient.  As an example in the logistical setting given above,
consider a family of joint distributions $\cZ$, each of which describes an 
uncertain event that results in capacity loss in network links. The risk-aligned quantity
of interest, here, would be a measure (e.g., a value-at-risk) of the amount of  flow that is 
delayed or lost under distribution $z \in \cZ$.  

Formally, the problem setup we will consider has the following characteristics. As inputs, we are  given a compact set $\cZ$ that parameterizes uncertainty, and for each $z \in \cZ$,   
\begin{itemize}
    \item [(a)] A set $I$ of nonnegative-valued functions of the form $\phi_i(x|z)$ ($i \in I$).   We assume that $I$
is finite though potentially very large.  We will refer to the $\phi_i$ as the
\textit{features}.  Each feature is a function of the decision vector $x$ as well
as the exogenous risk-inducing vector $z \in \cZ$. 
\item [(b)] A set of values
$\lambda_1 \ge \ldots \ge \lambda_{|I|} \ge 0$ with $\sum_{i = 1}^{|I|} \lambda_i = 1$. 
For $x \in P$ we define  the
impact on $x$ of event $z$
$$ \Phi(x|z) \ \doteq \ \sum_{k = 1}^{|I|} \lambda_{k} \phi_{(k)}(x \, | \, z), $$
where $\phi_{(k)}(x \, | \, z)$ is the $k^{th}$ largest value $\phi_i(x \, | \, z)$.  
We set
$$ \Phi(x) \, \doteq \, \max_{z \in \cZ} \Phi(x|z).$$
Thus 
problem \eqref{eq:generic-weighted}, in its epigraph representation, takes the form
\begin{subequations} \label{eq:ordfeatures0-epi}
\begin{align}
\min & \ c(x) \ + \ \Theta \, \Phi_L\\
\text{s.t.} \ & \ x \in P, \quad \  \Phi_L \, \ge \, \max_{z \in \cZ} \, \sum_{k = 1}^{|I|} \lambda_{k} \phi_{(k)}(x \, | \, z). \label{eq:ordfeatures1_Lepi}
\end{align}
\end{subequations}  
\end{itemize}

\noindent Denote by $\cS_{|I|}$ the set of permutations of $|I|$ entities, and for $\pi \in \cS_{|I|}$  (with a slight abuse of notation) use $\pi(i)$ to denote the corresponding element $i$ of $I$. Thus, for $x \in P$ and $z \in \cZ$, we have 
\begin{align}\label{eq:factorial}
&  \sum_{k = 1}^{|I|} \lambda_{k} \phi_{(k)}(x \, | \, z)\ = \ \max_{\pi \in \cS_{|I|}} \ \sum_{k = 1}^{|I|} \lambda_{k} \phi_{\pi(k)}(x \, | \, z).
\end{align}

\noindent In particular, when
$\lambda_k = 0$ for $k > 1$ problem \eqref{eq:ordfeatures0-epi} becomes
\begin{subequations} \label{eq:maxfeatures-epi}
\begin{align}  
\min & \ c(x) \ + \Theta \, \Phi_L\\
\text{s.t.} \ & \ x \in P, \quad \  \Phi_L \, \ge \, \max_{z \in \cZ} \max_{i \in I} \phi_i(x \, | \, z). \label{eq:maxfeatures_Lepi}
\end{align}
\end{subequations}  

\noindent Using \eqref{eq:factorial}, we note that \eqref{eq:ordfeatures0-epi} is, in fact, a \textit{special case} of
\eqref{eq:maxfeatures-epi} with appropriately redefined features (e.g., one for each $\pi \in \cS_{I}$). \textit{For simplicity of language}, in the theoretical results below we assume the form \eqref{eq:maxfeatures-epi}. Our computational experiments include cases with, e.g., $K = 10$.

This paper is organized as follows.  Section \ref{sec:review} describes related work and Section \ref{sec:algo} presents our methodological contributions which are
analyzed in Section \ref{subsec:analysis} (in particular, see Theorem \ref{thm:finiteconv}).  Section \ref{subsec:introex} presents simple applications of our algorithms, with more substantial experiments given 
in Section \ref{sec:numericalexper}.

\subsection{Review of prior work}\label{sec:review}
The perspective we take in this paper concerns risk arising from solutions to 
optimization problems that incorporate concentration, in a generic sense.  One of the earliest citations in this context is \cite{fratta1973flow}, which concerned routing problems (in packet networks) that can formally be stated as linear optimization problems.  However, congestion brings about the risk of queueing delays: let the flow on an arc with capacity $u$ be $x$: then the M/M/1 queueing delay is $u/(u - x)$ (see, e.g., \cite{kleinrock75}).  The combinatorial algorithm in \cite{fratta1973flow} approximately solves an optimization problem whose objective function is the original cost function, plus the sum of all queueing delays, thus trading-off cost versus queueing risk (i.e., delays).

A related problem was later taken up in \cite{ShahrokhiMatula}.  They consider a capacity-constrained multicommodity flow routing problem, where the capacities are insufficient to route all commodities.  Under such conditions, the algorithm in
\cite{ShahrokhiMatula} computes a routing that approximately  minimizes the maximum overload in any arc, which is defined as the ratio of flow to capacity.  The algorithm replaces this min-max task, with an approximately equivalent nonlinear optimization problem which is (again, approximately) handled using a first-order method.  

This approach was greatly extended and improved in \cite{PlotkinST91}, and \cite{grigoriadis1994fast}. Let $\cA$ denote the set of arcs in a multicommodity flow network, and for an arc $(i,j) \in \cA$, let $u_{ij}$ denote its capacity, and
for a (multicommodity) flow vector $f$ let $f_{ij}$ denote the sum of commodity flows routed on $(i,j)$.  Then the min-max routing problem described in the above 
paragraph can be stated as 
\begin{align}\label{eq:minmaxcong}
 & \min_{f \in \cF} \max_{(i,j) \in \cA} f_{ij}/u_{ij},
\end{align}
where $\cF$ is the set of feasible flows, i.e., all flows that deliver all commodities from their source to their destination, possibly exceeding capacities.  On very large networks with 
many commodities this optimization problem, which can be stated as a linear program, proves quite challenging to modern LP solvers.  In contrast, \cite{PlotkinST91} and \cite{grigoriadis1994fast} show that the min-max problem
can approximately be modeled using an appropriate nonlinear \textit{potential function} as a 
proxy:
\begin{align}\label{eq:potential}
& \min_{f \in \cF} \ \ln \left(\sum_{(i,j) \in \cA} e^{\alpha f_{ij}/u_{ij}}\right).
\end{align}
More precisely, given $0 < \epsilon < 1$, choosing $\alpha$ appropriately (primarily, $\alpha$ proportional to $\epsilon^{-1}$) a solution to the potential function problem \eqref{eq:potential} yields a flow vector $f$ that is $\epsilon$-optimal to the min-max problem \eqref{eq:minmaxcong}.  Moreover, the approximate 
minimization in \eqref{eq:potential} is carried out, effectively, via a first-order method where 
individual iterations amount to linear programs whose objective is the gradient
of the function in \eqref{eq:potential}.  We will rely on
a similar proxy in our work, though our algorithmic goals
and methodology are quite different.

As a function of $\epsilon$, the number 
of iterations needed to obtain an $\epsilon$-optimal solution to \eqref{eq:minmaxcong} grows proportional to $1/\epsilon^2$ (later improved to $1/\epsilon$, 
\cite{BienstockIyengar2004}).  See \cite{bienstock2006potential} for an analysis
of \cite{fratta1973flow}, \cite{PlotkinST91} and \cite{grigoriadis1994fast} and follow-up work that improved on these algorithms, as 
well as computational experiments.

Our risk perspective can include an exogenous adversarial component.  There is a very large literature of models for \textit{interdiction} that take an adversarial standpoint: how to optimally or maximally
disrupt, e.g., a logistical operation.  In this regard, the case of a 
network under stochastic capacity interdiction is studied in  
\cite{Cormican1998StochasticNI} (also see references therein).  This
work considers a variety of models where an adversary interdicts 
arcs of a network by reducing their capacity under various assumptions of likelihood of success of interdiction and of capacity reduction, as well as adversarial capabilities; and produces efficient 
formulations for such problems.
Optimization problems where under the risk of adversarial interdiction are often handled using a cutting-plane algorithm that is reminiscent of (or equivalent to) Benders' decomposition 
\cite{benders1962}.    Related problems are considered in 
\cite{brown2006defending}.

A salient and challenging  application of interdiction analysis involves the so-called $N-K$ problem in power grids; see \cite{swb2}, \cite{bienstock2010n}.  Briefly, an operational constraint in modern power grids is that \textit{any} simultaneous disablement of
up to $K$ components (e.g., arcs) should be tolerated by the system; here $K > 0$ is a small integer. The case $K = 1$ is enshrined as law in many countries, but larger values of $K$ 
are becoming relevant. Note that the enumeration of all subsets
of size $K$ is infeasible even for relatively small $K$ (e.g., 3) when the grid is large, and a generic Benders' approach, when applicable, proves
invaluable. 

Optimization problems under interdiction risk are usually (if unwittingly) modeled as bilevel optimization problems, for which there is also a very abundant literature.  Bilevel optimization problems are complex and Benders' decomposition is not always an easy or even feasible task.  For a recent survey, see \cite{schmidt2026}.

Finally, the incorporation of an adversarial model can bring about the robust optimization perspective,
especially in the so-called "risk-budgets" setup.  See \cite{bertsimas2004price} and \cite{bertsimas2011theory}. Many of the previously cited works on interdiction models can be seen through the lens of risk budgets.

\section{Algorithms}\label{sec:algo}
We describe a procedure, Algorithm \ref{alg:SOFTMAX-adv},  which has provable convergence attributes.  Notably, this procedure does
not necessarily solve problem \eqref{eq:maxfeatures-epi} to optimality, however it provably produces an output vector with desirable attributes.

This algorithm serves as a 
common intellectual core for a family of procedures that prove empirically successful. These procedures, which are discussed in Section \ref{subsec:modifications}, incorporate  pragmatic modifications to Algorithm \ref{alg:SOFTMAX-adv} for high-dimensional cases and in some cases are also 
endowed with provably correct attributes.
 
\newpage
%for any $x \in P$ and $z \in \cZ$ we write $$\phi_{\max}(x|z) = \max_{i \in I} \phi_i(x|z)$$ which is attained since $I$ is finite.  
\begin{algorithm}
\caption{SOFTMAX-ADVERSARIAL }\label{alg:SOFTMAX-adv}
\begin{algorithmic}[1] 
\State {\bf Inputs:}  parameters $\Theta > 0$; $\alpha > 0$;  positive tolerances $\ \Delta, \delta < 1, \, \Delta'$; max. iteration count $\,t^{\max}$.
%\State {\bf Inputs:} $t^{\max} > 0$ (max. no. of iterations), $\Theta > 0$,  $\Delta, \, \Delta' > 0$ (tolerances),  $\phi_{\text{target}} \ge 0$ and  $\alpha > 0$.
\State Set $t = 0$, and initialize the \emph{master problem} as
$$ \min \ c(x) + \Theta \, \Phi_L, \quad \text{s.t.} \ x \in P, \ \ \Phi_L \ge 0.$$ 
\While{$t < t^{\max}$}:
\State Solve the master problem; let $(x^{t},\Phi_L^{\ t})$ be an optimal solution.  
\State {\bf Boosting step.} Compute $z^{t} \in \cZ$ such that $$ \ln\sum_{i \in I} e^{\alpha \phi_i(x^{t}|z^t)}     \ \ \ge \ \ \max_{z \in \cZ} \ln \sum_{i \in I}     e^{\alpha \phi_i(x^{t}|z)}      \quad - \quad \Delta'.$$
\quad \ \ Define $\displaystyle \phi_{\max}^t = \max_{i \in I} \phi_{i}(x^{t}|z^t).$
\If{(a) $ \phi_{\max}^t \, \le \, \Phi_L^{\,t} + \Delta$, or (b)  $ \phi_{\max}^t-\Phi^{\, t}_L \le \delta \, \phi_{\max}^t$ }  {\bf STOP}.
%\If{$ \phi_{\max}(x^{t}|z^t) \, \le \, \Phi_L^{\,t} + \Delta$ {\bf or} $\frac{\phi_{\max}(x^{t}|z^t) - \Phi_L^{\,t}}{\phi_{\max}(x^{t}|z^t)} \le \Delta$, }  {\bf STOP}.
%\If{(a) $   
%\phi_{\max}(x^{t}|z^t) \, \le \, \Phi_L^{\,t} + \Delta$, or (b) $     \phi_{\max}(x^{t}|z^t) \, \le \, \phi_{\text{target}}, $} {\bf STOP. Currently NOT using (b)}
%\If{$   \ln \sum_{i \in I}     e^{\alpha \phi_i(x^{t}|z^t)} \, \le \, \Phi_L^{\,t} + \Delta$} {\bf STOP.}
\Else 
\State  For each $i \in I$ write $\pi^{t}_i = \frac{e^{\alpha \phi_i(x^{t}|z^t)}}{\sum_{j \in I} e^{\alpha  \phi_j(x^{t}|z^t)}  } $ 
\State {\bf Separation.} Add the cut 
$  \,  \Phi_L \ \ge \  \sum_{i \in I} \pi^{t}_i \, \phi_i(x|z^t)$ to the master problem.
\EndIf 
\State $t \gets t+1$.
\EndWhile
%\EndProcedure
\end{algorithmic}
\end{algorithm}

\subsection{Preliminary discussion of Algorithm \ref{alg:SOFTMAX-adv}}\label{subsec:discussion}
For $x \in X$ recall that  $\Phi(x) =\max_{z \in \cZ} \max_{i \in I} \phi_{\text{i}}(x|z)$. 
%i.e., the right-hand side of constraint \eqref{eq:maxfeatures_Lepi}. Consider any iteration  $t' \ge 1$. Note that $\pi^{t'} \ge 0$ and $\sum_i \pi^{t'}_i = 1$.  As we will
Consider any iteration $t \ge 0$. Note that $\pi^{t} \ge 0$ and $\sum_i \pi^{t}_i = 1$.  As we will
see below (Lemma \ref{le:rhoLbelos}), this implies $\Phi^{\, t}_L \le \Phi(x^t)$, and,
as a result, the master problem is always a relaxation of \eqref{eq:maxfeatures-epi}.
Moreover, 
\begin{itemize}
\item $\Phi(x^t) - \Phi^{\, t}_L$  amounts to the infeasibility of $(x^t, \Phi^{\, t}_L)$ in 
problem \eqref{eq:maxfeatures-epi} -- should this quantity be small, the algorithm will have 
converged to an approximate optimal solution.  
    \item By definition $\displaystyle \phi^t_{\max} \doteq \max_{i\in I} \phi_i(x^t | z^t) \, \le \Phi(x^t)$.
    \item Furthermore  (Lemma \ref{le:phimaxok}), $\Phi(x^t) \le \phi^t_{\max} + 2\Delta$.  
\end{itemize}
In summary, $\phi^t_{\max}$ is a valid proxy to $\Phi(x^t)$, and, as a result, the 
stopping conditions in line 6 are numerically valid termination conditions for the cutting plane
algorithm (absolute and relative, resp.). In order for this statement to be mathematically valid
we need that $\delta$ be small in absolute terms, e.g., $\delta = 10^{-6}$, and that $\Delta$ be small
be small in relative terms, i.e. $\Delta \le 10^{-6} \phi_{\max}^t$.  In the numerical implemenation of
Algorithm \eqref{alg:SOFTMAX-adv} we will relax these conditions in order to allow early termination, in line with our 'de-risking' goal.  \\

\noindent {\bf Remark.} In machine learning language, the function optimized in line 5 of the algorithm
is known as  \textit{log-sum-exp}, while the cut coefficients in line 8 follow the \textit{softmax} function.  Formally, given $y \in \bR^n$, and $\alpha > 0$ we write
 $\text{SOFTMAX}(\alpha,y)_i \ \doteq \ \frac{e^{\alpha y_i}}{\sum_{j = 1}^n e^{\alpha y_j}}.$
It is seen that the softmax function is proportional to the gradient of log-sum-exp (in feature space), that is to say,
defining $\phi = (\phi_i, \, i \in I)$, $L(\phi) =  \ln\sum_{i \in I} e^{\alpha \phi_i} $ and $g(\phi)$ as the vector with entry $g_i(\phi) = \alpha \frac{e^{\alpha \phi_i}}{\sum_{j \in I} e^{\alpha  \phi_j}}  $ for each $i \in I$, we have $ \nabla_{\phi} L(\phi)   \ = \ g(\phi)$.

As defined, $L$ is a convex function of $\phi$. Thus, given a particular vector $\bar \phi$ it
holds that
\begin{align}
& L(\phi) \ \ge \ g^T(\bar \phi) (\phi - \bar \phi) \, + \, L(\bar\phi). \label{eq:kellylogexp}
\end{align}
Using this observation, one can write a modification to Algorithm \ref{alg:SOFTMAX-adv} similar to Kelley's \cite{kelley60} algorithm
using the outer-approximation cuts \eqref{eq:kellylogexp} instead of the cuts added in line 9.  

\subsection{Analysis of Algorithm \ref{alg:SOFTMAX-adv}}\label{subsec:analysis}
We next present results regarding the convergence properties of Algorithm \ref{alg:SOFTMAX-adv}, concluding with Theorem \ref{thm:finiteconv} and Corollary \ref{co:reallyfinite}. 
%In any iteration $t$, write $\phi^t_U \doteq \Phi(x^t)$, i.e., $\max_{z \in \cZ} \max_{i \in I} \phi_{\text{i}}(x^t|z)$ according to our notation above.  
We will set $\Delta' = \alpha \Delta$ with $\alpha \ge \frac{\ln |I| + [ \ln( \,2\Phi(x^0)/\Delta \,)]^+}{\Delta} $, which is well-defined since $\alpha$ is not actually needed until the first execution of line 5, i.e., not until after
 $x^0$ is computed. 
\begin{LE} \label{le:rhoLbelos}At any iteration $t$, $\Phi^{\,t}_L \ \le \ \Phi(x^t)$.
\end{LE}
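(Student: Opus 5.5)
The plan is to use the fact that $(x^t,\Phi^{\,t}_L)$ is an optimal solution of the master problem, so $\Phi^{\,t}_L$ is as small as the constraints on $\Phi_L$ allow once $x^t$ is fixed. Those constraints are $\Phi_L \ge 0$ together with the cuts $\Phi_L \ge \sum_{i\in I}\pi^{s}_i\,\phi_i(x|z^s)$ added at earlier iterations $s<t$. Since $\Theta>0$, minimality of $c(x^t)+\Theta\,\Phi_L$ over the $\Phi_L$ coordinate (with $x=x^t$ held fixed, which keeps $x$ feasible) gives
\begin{align*}
\Phi^{\,t}_L \;=\; \max\Bigl\{0,\ \max_{s<t}\ \sum_{i\in I}\pi^{s}_i\,\phi_i(x^t|z^s)\Bigr\}.
\end{align*}
If $\Phi^{\,t}_L$ exceeded this value, lowering it to this value would remain feasible and strictly decrease the objective.

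It then suffices to bound each term on the right by $\Phi(x^t)$. For the constant $0$, this holds because every feature is nonnegative, so $\Phi(x^t)=\max_{z\in\cZ}\max_{i\in I}\phi_i(x^t|z)\ge 0$. For a cut generated at iteration $s$, the weights satisfy $\pi^s_i\ge 0$ and $\sum_i \pi^s_i=1$, as noted in Section~\ref{subsec:discussion}. Hence the cut value is a convex combination of the features at the point $(x^t,z^s)$:
\begin{align*}
\sum_{i\in I}\pi^{s}_i\,\phi_i(x^t|z^s)\;\le\;\max_{i\in I}\phi_i(x^t|z^s)\;\le\;\max_{z\in\cZ}\max_{i\in I}\phi_i(x^t|z)\;=\;\Phi(x^t).
\end{align*}
The second inequality uses $z^s\in\cZ$. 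Combining the two bounds gives $\Phi^{\,t}_L\le\Phi(x^t)$.

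There is no real obstacle here. The only point needing care is the first step: the lemma would fail for an arbitrary feasible point of the master problem, so the argument must explicitly use optimality together with $\Theta>0$ to pin $\Phi^{\,t}_L$ to the maximum of the active lower bounds. For $t=0$ there are no cuts, so $\Phi^{\,0}_L=0\le\Phi(x^0)$.
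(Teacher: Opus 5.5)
Your proof is correct and follows the paper's argument: optimality of the master solution together with $\Theta>0$ pins $\Phi^{\,t}_L$ to the largest active lower bound at $x^t$, and each cut value is a convex combination of the $\phi_i(x^t|z^{s})$, hence at most $\max_{i\in I}\phi_i(x^t|z^{s})\le\Phi(x^t)$ because $z^{s}\in\cZ$. The only cosmetic difference is that you treat the constraint $\Phi_L\ge 0$ as a separate case. The paper instead, for $t>0$, directly takes $\Phi^{\,t}_L$ to equal some cut value, which is legitimate because cut values are nonnegative and at least one cut is present.
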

\begin{PROOF} This is true for $t = 0$ since $\phi^0_L = 0$, and, for $t > 0$, since $\Theta > 0$ we have, for some $t' < t$, that  
$ \phi^{ \, t}_L \ = \ \sum_{i \in I} \pi^{t'}_i \, \phi_i(x^t|z^{t'}) \ \le \ \max_{i \in I} \phi_i(x^t| z^{t'})$,  since the $\pi^{t'}_i$ are nonnegative and sum to $1$. \end{PROOF}
\begin{LE} \label{le:phimaxok} At any iteration $t$, $ \phi_{\max}^t \, \le \, \Phi(x^t) \, \le \, \phi_{\max}^t \, + \, 2\Delta$.\end{LE}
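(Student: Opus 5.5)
The lower bound is immediate. By definition $\phi^t_{\max} = \max_{i \in I} \phi_i(x^t|z^t)$ with $z^t \in \cZ$, and $\Phi(x^t) = \max_{z \in \cZ}\max_{i \in I}\phi_i(x^t|z)$. So $\phi^t_{\max} \le \Phi(x^t)$.

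For the upper bound I would use the two elementary log-sum-exp sandwich inequalities. For any vector $(y_i)_{i\in I}$ and $\alpha>0$,
$$\alpha \max_{i} y_i \ \le\ \ln\sum_{i\in I} e^{\alpha y_i} \ \le\ \alpha\max_i y_i + \ln|I|.$$
The left inequality holds because the sum contains the largest term. The right inequality holds because each of the $|I|$ terms is at most $e^{\alpha \max_i y_i}$.

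The chain of inequalities is then as follows.
\begin{itemize}
\item Let $\bar z \in \cZ$ attain $\Phi(x^t)$. If the maximum over the compact set $\cZ$ is not known to be attained (continuity of the $\phi_i(x^t|\cdot)$ is not stated), I would instead take $\bar z$ with $\max_i \phi_i(x^t|\bar z) \ge \Phi(x^t)-\epsilon$ and let $\epsilon \to 0$ at the end.
\item The left sandwich inequality gives $\alpha\Phi(x^t) \le \ln\sum_i e^{\alpha\phi_i(x^t|\bar z)}$.
\item This is at most $\max_{z\in\cZ}\ln\sum_i e^{\alpha\phi_i(x^t|z)}$.
\item By the boosting step (line 5) with $\Delta'=\alpha\Delta$, this is at most $\ln\sum_i e^{\alpha\phi_i(x^t|z^t)} + \alpha\Delta$.
\item The right sandwich inequality bounds this by $\alpha\phi^t_{\max} + \ln|I| + \alpha\Delta$.
\end{itemize}
Dividing by $\alpha$ yields
$$\Phi(x^t) \ \le\ \phi^t_{\max} + \Delta + \frac{\ln|I|}{\alpha}.$$

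Finally, the choice $\alpha \ge \bigl(\ln|I| + [\ln(2\Phi(x^0)/\Delta)]^+\bigr)/\Delta$ implies $\alpha \ge \ln|I|/\Delta$, because the $[\cdot]^+$ term is nonnegative. Hence $\ln|I|/\alpha \le \Delta$, which gives $\Phi(x^t)\le\phi^t_{\max}+2\Delta$.

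The only step needing care is the attainment of the maximum over $\cZ$, which the $\epsilon$-argument in the first bullet handles. Everything else is routine, and the extra $[\ln(2\Phi(x^0)/\Delta)]^+$ term in $\alpha$ is not needed for this lemma; it presumably serves later results.
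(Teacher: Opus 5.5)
Your proof is correct and follows the paper's argument essentially line for line. It uses the log-sum-exp sandwich, the near-optimality of $z^t$ from line 5 with $\Delta' = \alpha\Delta$, and the bound $\ln|I|/\alpha \le \Delta$ coming from the choice of $\alpha$. Your $\epsilon$-argument for attainment is harmless but unnecessary: the pointwise inequality $\alpha\max_i\phi_i(x^t|z) \le \ln\sum_i e^{\alpha\phi_i(x^t|z)}$ passes directly to the supremum over $z \in \cZ$.
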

\begin{PROOF}
The first inequality follows by definition of $\phi_{\max}^t$.  Next, given $z \in \cZ$, $e^{\alpha \max_{i \in I} \phi_{i}(x^t|z)} \, \le \, \, \sum_{i \in I} e^{\alpha\phi_i(x^t | z)}\le \, |I| e^{\alpha \max_{i \in I} \phi_{i}(x^t|z)}$ and therefore
%\begin{subequations}
\begin{align}\label{eq:key1}
 & \alpha \, \max_{i \in I} \phi_{i}(x^t|z) \quad \le \quad \ln \sum_{i \in I} e^{\alpha \phi_i(x^t | z)} \quad \le \quad \ln |I| + \alpha  \max_{i \in I} \phi_{i}(x^t|z).  
\end{align}
% \end{subequations}
So $$\alpha \Phi(x^t)  \ \le \ \max_{z \in \cZ} \ln \sum_{i \in I} e^{\alpha \phi_i(x^t|z)} \ \le \ \ln \sum_{i \in I} e^{\alpha \phi_i(x^t|z^t)} + \Delta' \ ,$$
as per line 5 of the algorithm. Using \eqref{eq:key1} with $z = z^t$, the rightmost quantity is at most
$\ln |I| + \Delta' + \alpha \phi_{\max}^t. $
%\phi_{\max}(x^t | z)$, %and, also, $\alpha \phi_{\max}(x^t|z^t)} \, \le \, \ln \sum_{i \in I} e^{\phi_i(x^t | z^t)}$.  
In summary,
$ \Phi(x^t) \ \le \frac{\ln |I| + \Delta'}{\alpha} \ +  \ \phi_{\max}^t \, \le \, \phi_{\max}^t  +  2\Delta$ by definition of $\alpha$ and $\Delta'$.
\end{PROOF}
\begin{CO} \label{co:inbetween} Suppose the algorithm terminates at iteration $t$ (line 6). If criterion (a) applies,  $\Phi(x^t) - 3\Delta \, \le \, \Phi^{\, t}_L \, \le \,  \Phi(x^t)$. And if (b) applies
$(1 - \delta)\Phi(x^t) - 2\Delta \, \le \, \Phi^{\, t}_L  \, \le \,    \Phi(x^t)$. \end{CO}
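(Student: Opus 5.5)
The upper bound $\Phi^{\,t}_L \le \Phi(x^t)$ in both cases is exactly Lemma \ref{le:rhoLbelos}, so only the lower bounds need work. The plan is to combine the termination criterion from line 6 with the sandwich $\phi^t_{\max} \le \Phi(x^t) \le \phi^t_{\max} + 2\Delta$ of Lemma \ref{le:phimaxok}. The idea is to replace $\phi^t_{\max}$ by the smaller quantity $\Phi(x^t) - 2\Delta$ wherever it appears with a favorable sign.

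\emph{Criterion (a).} Here $\phi^t_{\max} \le \Phi^{\,t}_L + \Delta$, so $\Phi^{\,t}_L \ge \phi^t_{\max} - \Delta$. By the right-hand inequality of Lemma \ref{le:phimaxok}, $\phi^t_{\max} \ge \Phi(x^t) - 2\Delta$. Combining the two gives $\Phi^{\,t}_L \ge \Phi(x^t) - 3\Delta$, as claimed.

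\emph{Criterion (b).} Here $\phi^t_{\max} - \Phi^{\,t}_L \le \delta\,\phi^t_{\max}$, which rearranges to $\Phi^{\,t}_L \ge (1-\delta)\,\phi^t_{\max}$. Since $\delta < 1$ by the algorithm's inputs, the factor $1-\delta$ is positive, so the bound is monotone in $\phi^t_{\max}$. Substituting $\phi^t_{\max} \ge \Phi(x^t) - 2\Delta$ then yields
\begin{align*}
\Phi^{\,t}_L \ \ge \ (1-\delta)\Phi(x^t) - 2(1-\delta)\Delta \ \ge \ (1-\delta)\Phi(x^t) - 2\Delta,
\end{align*}
where the last step uses $0 < 1-\delta \le 1$ and $\Delta > 0$.

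There is no substantive obstacle; the argument is bookkeeping. The one point to watch is the sign of $1-\delta$ when substituting the lower bound for $\phi^t_{\max}$ in case (b), which is guaranteed by the standing assumption $\delta < 1$. All remaining content is inherited from Lemmas \ref{le:rhoLbelos} and \ref{le:phimaxok}, the latter depending on the choice $\Delta' = \alpha\Delta$ with the stated lower bound on $\alpha$.
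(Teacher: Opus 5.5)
Your proof is correct and follows the same route as the paper, which simply cites the termination condition together with Lemmas \ref{le:rhoLbelos} and \ref{le:phimaxok}. Your handling of case (b), using $0 < 1-\delta \le 1$ to pass from $-2(1-\delta)\Delta$ to $-2\Delta$, spells out a step the paper leaves implicit.
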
 
\begin{PROOF} This follows from the termination condition, Lemmas \ref{le:rhoLbelos} and \ref{le:phimaxok}.  

\end{PROOF}
\begin{OB} \label{ob:flip}Given $t$, let $\tilde x \in P$ be such that $c(\tilde x) < c(x^t)$. Then $\Phi(\tilde x) > \Phi^{\, t}_L$.  
\end{OB}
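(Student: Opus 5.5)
The plan is a proof by contradiction that uses only the optimality of $(x^t,\Phi^{\,t}_L)$ for the master problem at iteration $t$, the positivity of $\Theta$, and the validity of every cut added so far. Suppose, to the contrary, that $\tilde x \in P$ satisfies $c(\tilde x) < c(x^t)$ and $\Phi(\tilde x) \le \Phi^{\,t}_L$. I will build a point that is feasible for the master problem at iteration $t$ and has strictly smaller objective value than $(x^t,\Phi^{\,t}_L)$.

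The candidate point is $(\tilde x, \Phi^{\,t}_L)$. First, $\tilde x \in P$, and $\Phi^{\,t}_L \ge 0$ because the master problem contains the constraint $\Phi_L \ge 0$. Next, consider any cut added before iteration $t$, say $\Phi_L \ge \sum_{i\in I} \pi^{t'}_i \phi_i(x|z^{t'})$ with $t'<t$. The $\pi^{t'}_i$ are nonnegative and sum to one, and $z^{t'}\in\cZ$, so, exactly as in the proof of Lemma \ref{le:rhoLbelos},
$$\sum_{i\in I}\pi^{t'}_i\phi_i(\tilde x|z^{t'}) \ \le\ \max_{i\in I}\phi_i(\tilde x|z^{t'}) \ \le\ \Phi(\tilde x)\ \le\ \Phi^{\,t}_L .$$
Hence $(\tilde x,\Phi^{\,t}_L)$ satisfies every cut and is feasible for the master problem.

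Its objective value is $c(\tilde x)+\Theta\Phi^{\,t}_L < c(x^t)+\Theta\Phi^{\,t}_L$. This contradicts the optimality of $(x^t,\Phi^{\,t}_L)$, so $\Phi(\tilde x) > \Phi^{\,t}_L$.

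The only step needing care is the cut-validity step: cuts are valid for every $x$, not only for the iterate at which they were generated. That holds because each cut's right-hand side is a convex combination of features evaluated at a fixed $z^{t'}\in\cZ$, and such a combination is bounded above by $\Phi(x)$ for every $x$.
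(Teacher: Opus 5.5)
Your proof is correct and is essentially the paper's argument. The paper just notes that $(\tilde x, \Phi(\tilde x))$ is feasible for the master problem at iteration $t$, so optimality gives $c(x^t)+\Theta\Phi^{\,t}_L \le c(\tilde x)+\Theta\Phi(\tilde x)$, and then $\Theta>0$ yields $\Phi(\tilde x)>\Phi^{\,t}_L$; your contradiction version pairs $\tilde x$ with $\Phi^{\,t}_L$ instead, which rests on the same feasibility-plus-optimality idea (and, incidentally, does not need $\Theta>0$ at the final step).
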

\noindent This follows because $(\tilde x, \Phi(\tilde x))$ is feasible for the master problem at iteration $t$.  In light of Corollary \ref{co:inbetween}, this shows that even prior to
termination, the algorithm yields useful information.

\begin{LE} \label{le:varrhoupper} For any $t$, $\Phi^{\, t}_L \le \Phi(x^0)$.
\end{LE}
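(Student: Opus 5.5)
The argument compares objective values of the master problem at iterations $0$ and $t$, using that cuts only shrink its feasible region while $(x^0,\Phi(x^0))$ stays feasible throughout.

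\emph{Step 1: $x^0$ is cost-optimal.} At iteration $0$ the master problem is $\min c(x)+\Theta\Phi_L$ over $x\in P$, $\Phi_L\ge 0$. Since $\Theta>0$, any optimum has $\Phi^{\,0}_L=0$, and $x^0$ minimizes $c$ over $P$. Hence $c(x^0)\le c(x^t)$ for every $t$.

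\emph{Step 2: $(x^0,\Phi(x^0))$ is feasible for every master problem.} Every cut added in line 9 has the form $\Phi_L\ge\sum_i\pi^{t'}_i\phi_i(x|z^{t'})$, with $\pi^{t'}\ge0$ and $\sum_i\pi^{t'}_i=1$. Exactly as in Lemma \ref{le:rhoLbelos}, the right-hand side at $x^0$ is at most $\max_i\phi_i(x^0|z^{t'})\le\Phi(x^0)$. Also $\Phi(x^0)\ge0$ because the features are nonnegative.

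\emph{Step 3: compare objective values.} Optimality of $(x^t,\Phi^{\,t}_L)$ at iteration $t$, together with Step 2, gives
\[
c(x^t)+\Theta\Phi^{\,t}_L\le c(x^0)+\Theta\Phi(x^0).
\]
Combining this with $c(x^0)\le c(x^t)$ from Step 1 gives $\Theta\Phi^{\,t}_L\le\Theta\Phi(x^0)$. Dividing by $\Theta>0$ yields the claim.

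The only delicate point is Step 1, namely that $x^0$ minimizes cost even if the initial master problem has several optimal solutions. This is immediate because at iteration $0$ the $\Phi_L$ component decouples from $x$.
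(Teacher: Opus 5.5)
Your proof is correct and follows the paper's argument. Feasibility of $(x^0,\Phi(x^0))$ in every master problem gives $c(x^t)+\Theta\,\Phi^{\,t}_L\le c(x^0)+\Theta\,\Phi(x^0)$, and $c(x^0)\le c(x^t)$ finishes it; you simply spell out why the cuts are valid at $x^0$ and why $x^0$ is cost-minimal, which the paper leaves implicit (``by construction'').
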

\begin{PROOF}
For any $x \in P$, the pair $(x,\Phi(x))) $ is feasible for the master problem at iteration $t$. Using this fact with $x = x^0$, we get 
$ c(x^t) + \Theta \, \Phi^{\, t}_L \le c(x^0) + \Theta \,\Phi(x^0).$
Since by construction $c(x^0) \le c(x^t)$, and $\Theta > 0$, the proof is complete.
\end{PROOF}

In the proofs that follow, for a given $t$ we define
$$S^t = \{i \, : \, \phi_i(x^t|z^t) \le \Phi^{\, t}_L\} \quad \text{and} \quad B^t = \{i : \, \phi_i(x^t|z^t) > \Phi^{\, t}_L\}.$$
\begin{LE} \label{le:halfviol}
   Suppose that at iteration $t$ the algorithm does not stop on line 6.  Then the cut on line 9 is violated by $(x^t, \Phi^{\, t}_L)$ by more than $\frac{\sum_{i \in B^t} e^{\alpha \phi_{i}(x^t|z^t)}}{\sum_{i \in I} e^{\alpha  \phi_i(x^{t}|z^t)}} \Delta/2$.
\end{LE}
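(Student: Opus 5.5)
The plan is to lower-bound the violation
\[
V^t \ \doteq\ \sum_{i\in I}\pi^t_i\,\phi_i(x^t|z^t)-\Phi^{\,t}_L \ =\ \sum_{i\in B^t}\pi^t_i\big(\phi_i(x^t|z^t)-\Phi^{\,t}_L\big)\ +\ \sum_{i\in S^t}\pi^t_i\big(\phi_i(x^t|z^t)-\Phi^{\,t}_L\big),
\]
using $\sum_i\pi^t_i=1$. The $B^t$-sum is positive and the $S^t$-sum is nonpositive. Write $m$ for an index attaining $\phi^t_{\max}$. Since the algorithm did not stop under criterion (a) of line 6, we have $\phi^t_{\max}>\Phi^{\,t}_L+\Delta$. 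Hence $m\in B^t$, and its gap exceeds $\Delta$.

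First I bound the negative $S^t$-part, which is the heart of the argument. For $i\in S^t$ the softmax ratio gives
\[
\pi^t_i/\pi^t_m=e^{\alpha(\phi_i(x^t|z^t)-\phi^t_{\max})}\le e^{-\alpha\Delta}.
\]
Each such term loses at most $\pi^t_i\,\Phi^{\,t}_L\le \pi^t_i\,\Phi(x^0)$, since $\phi_i\ge 0$ and Lemma \ref{le:varrhoupper} applies. The total loss is therefore at most $|I|\,e^{-\alpha\Delta}\,\Phi(x^0)\,\pi^t_m$. The choice $\alpha\ge(\ln|I|+[\ln(2\Phi(x^0)/\Delta)]^+)/\Delta$ gives $e^{-\alpha\Delta}\le \Delta/(2|I|\Phi(x^0))$, so the $S^t$-loss is at most $\pi^t_m\Delta/2$. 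If $\Phi(x^0)=0$ the loss is trivially zero.

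Next I compare $V^t$ with the target $\big(\sum_{i\in B^t}\pi^t_i\big)\Delta/2$ by writing
\[
V^t-\Big(\sum_{i\in B^t}\pi^t_i\Big)\frac{\Delta}{2}\ =\ \sum_{i\in B^t}\pi^t_i\Big(\phi_i-\Phi^{\,t}_L-\frac{\Delta}{2}\Big)+\sum_{i\in S^t}\pi^t_i(\phi_i-\Phi^{\,t}_L).
\]
The $m$-term contributes strictly more than $\pi^t_m\Delta/2$, and this exactly pays for the $S^t$-loss. Every $i\in B^t$ with gap at least $\Delta/2$ contributes nonnegatively.

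The main obstacle is the indices in $B^t$ whose gap lies in $(0,\Delta/2)$. Each contributes a deficit smaller than $\pi^t_i\Delta/2$, with $\pi^t_i\le e^{-\alpha\Delta/2}\pi^t_m$. My first attempt is to merge them with $S^t$: treat every $i$ with $\phi_i<\Phi^{\,t}_L+\Delta/2$ as "bad", with per-unit loss at most $\Phi^{\,t}_L+\Delta/2$, so the total bad loss is at most $|I|e^{-\alpha\Delta/2}(\Phi(x^0)+\Delta/2)\pi^t_m$. I would then check whether the strict slack $\pi^t_m(\phi^t_{\max}-\Phi^{\,t}_L-\Delta)>0$ plus the stated bound on $\alpha$ absorbs this. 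If it does not, the natural fix is to tighten the constant in $\alpha$, for instance replacing $\Delta$ by $\Delta/2$ in its definition, so that $e^{-\alpha\Delta/2}\le \Delta/(2|I|(\Phi(x^0)+\Delta))$. That would push the whole bad loss below $\pi^t_m\Delta/2$, and combining the pieces would yield the claimed strict inequality.
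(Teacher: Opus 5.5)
Your bound on the $S^t$-part is correct and matches the paper's estimate ($\pi^t_i< e^{-\alpha\Delta}\pi^t_m$ on $S^t$, $\Phi^t_L\le\Phi(x^0)$ by Lemma \ref{le:varrhoupper}, then the choice of $\alpha$). The genuine gap is the one you flag and leave open: indices $i\in B^t$ with $\Phi^t_L<\phi_i(x^t|z^t)<\Phi^t_L+\Delta/2$. With the stated lower bound on $\alpha$ this gap cannot be closed, because the statement itself fails.

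Here is a counterexample. Take $|\cZ|=1$ and iteration $t=0$, so $\Phi^0_L=0$. Let $|I|=1000$ and $\Delta=1/2$. Set $\phi_1(x^0)=0.505$ and $\phi_i(x^0)=1/8$ for $i\ge2$. Take $\alpha$ at its lower bound, $\alpha\Delta=\ln 2020$.

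Neither stopping test fires, since $0.505>\Phi^0_L+\Delta$ and $\delta<1$. Also $B^0=I$, so the lemma claims a violation above $\Delta/2=0.25$. But $e^{0.505\alpha}=2020^{1.01}\approx2180$ and $e^{\alpha/8}=2020^{1/4}\approx6.70$ give
\[
\sum_i\pi^0_i\phi_i(x^0)-\Phi^0_L\;=\;\frac{2180\cdot0.505+999\cdot6.70\cdot0.125}{2180+999\cdot6.70}\;\approx\;0.218 .
\]
The reason shows up in your own bookkeeping. A small-gap index has weight up to $e^{-\alpha\Delta/2}\pi^t_m$, not $e^{-\alpha\Delta}\pi^t_m$. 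The stated $\alpha$ controls $|I|e^{-\alpha\Delta}$, but $|I|e^{-\alpha\Delta/2}$ can be as large as $\sqrt{|I|\Delta/(2\Phi(x^0))}$. The strict slack $\pi^t_m(\phi^t_{\max}-\Phi^t_L-\Delta)$ can be arbitrarily small, so it does not help either.

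The paper's proof has exactly this hole. It reduces to $\sum_{i\in S^t}e^{\alpha\phi_i}\Phi^t_L<\sum_{i\in B^t}e^{\alpha\phi_i}(\phi_i-\Phi^t_L-\Delta/2)$. It then lower-bounds the right-hand side by its single term $e^{\alpha\phi^t_{\max}}(\phi^t_{\max}-\Phi^t_L-\Delta/2)$, which tacitly assumes every other summand is nonnegative. With the paper's $\alpha$, that chain only proves the weaker bound with numerator $e^{\alpha\phi^t_{\max}}$ instead of $\sum_{i\in B^t}e^{\alpha\phi_i}$. That weaker bound does not yield Corollary \ref{co:finiteviol} when many features sit near the maximum.

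Your fallback is the right repair. Enlarge $\alpha$ so that $e^{-\alpha\Delta/2}\le\Delta/(2|I|(\Phi(x^0)+\Delta))$. Then every index with $\phi_i<\Phi^t_L+\Delta/2$ has $\pi^t_i<e^{-\alpha\Delta/2}\pi^t_m$ and per-unit loss at most $\max\{\Phi^t_L,\Delta/2\}\le\Phi(x^0)+\Delta/2$. Hence the total loss is below $\pi^t_m\Delta/2$, and the $m$-term's surplus, which is strictly more than $\pi^t_m\Delta/2$, covers it. The corollary's argument $r>1/2$ is unaffected by increasing $\alpha$. So you have a correct proof of a corrected lemma, but no proof of the statement as posed, and none is possible.
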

\begin{PROOF}
It suffices to show that 
\begin{align}
& \sum_{i \in I} e^{\alpha \phi_i(x^t|z^t)}\Phi^{\, t}_L \ + \ \sum_{i \in B^t} e^{\alpha \phi_{i}(x^t|z^t)} \, \Delta/2 \ < \ \sum_{i \in B^t} e^{\alpha \phi_i(x^t|z^t)} \phi_i(x^t|z^t), \quad \text{i.e., that} \\
& \sum_{i \in S^t} e^{\alpha \phi_i(x^t|z^t)}\Phi^{\, t}_L \ < \ \sum_{i \in B^t} e^{\alpha \phi_i(x^t|z^t)} (\phi_i(x^t|z^t) - \Phi^{\,t}_L - \Delta/2).
\end{align}
% comment: divide the first row by $\sum_{i \in I} e^{\alpha \phi_i(x^t|z^t)}$
%\quad \text{which is implied by}\\
%& |S_t| e^{\alpha \Phi^{\, t}_L} \Phi_L^T \ < \ 
%\end{align}
The left-hand side is upper bounded by $|S_t| e^{\alpha \Phi^{\, t}_L} \Phi^{\, t}_L < |S_t| e^{\alpha (\phi_{\max}^t - \Delta)} \Phi^{\, t}_L$, while the right-hand side is lower bounded by $e^{\alpha \phi_{\max}^t} (\phi_{\max}^t - \Phi^{\, t}_L - \Delta/2) \ > \ e^{\alpha \phi_{\max}^t} \Delta/2$.  Hence the result follows
if we can argue that
$$ |S_t|  \Phi^{\, t}_L \ < \ e^{\alpha \Delta} \Delta/2.$$
Using $|S| \le |I|$ and Lemma \ref{le:varrhoupper}, this holds since
$ \ln |I| + \ln \Phi(x^0) \ \le \ \alpha \Delta + \ln (\Delta/2)$,  
by choice of $\alpha$.
\end{PROOF}
\begin{CO} \label{co:finiteviol}
Under the same assumptions as Lemma \ref{le:halfviol}, the violation of the
cut added in line 9 is at least $\Delta/4$.
\end{CO}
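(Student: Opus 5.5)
By Lemma \ref{le:halfviol}, the violation exceeds $w\,\Delta/2$, where
$$w \doteq \frac{\sum_{i \in B^t} e^{\alpha \phi_{i}(x^t|z^t)}}{\sum_{i \in I} e^{\alpha \phi_i(x^{t}|z^t)}}.$$
It therefore suffices to show $w \ge 1/2$. Equivalently, we want
$$\sum_{i \in S^t} e^{\alpha \phi_i(x^t|z^t)} \ \le \ \sum_{i \in B^t} e^{\alpha \phi_i(x^t|z^t)}.$$

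\textbf{Lower bound on the $B^t$ sum.} Since the algorithm did not stop at line 6, criterion (a) fails, so $\phi^t_{\max} > \Phi^{\,t}_L + \Delta$. In particular, any index $i^*$ attaining $\phi^t_{\max}$ lies in $B^t$. Hence the right-hand side is at least $e^{\alpha \phi^t_{\max}}$.

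\textbf{Upper bound on the $S^t$ sum.} Every $i \in S^t$ has $\phi_i(x^t|z^t) \le \Phi^{\,t}_L < \phi^t_{\max} - \Delta$. The left-hand side is therefore at most $|I|\, e^{\alpha(\phi^t_{\max} - \Delta)}$.

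\textbf{Comparing the two.} It remains to check that $|I|\, e^{-\alpha\Delta} \le 1$, i.e. $\alpha \Delta \ge \ln |I|$. This follows from the choice $\alpha \ge \bigl(\ln|I| + [\ln(2\Phi(x^0)/\Delta)]^+\bigr)/\Delta$, since the bracketed term is nonnegative. Thus $w \ge 1/2$, and the violation exceeds $\tfrac12 \cdot \Delta/2 = \Delta/4$.

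The only step needing care is that $S^t$ is bounded crudely by $|I|$ terms. This costs nothing, because the $\ln|I|$ term in $\alpha$ was included precisely to absorb it. No obstacle beyond this bookkeeping is expected.
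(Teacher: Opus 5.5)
Your proposal is correct and follows essentially the paper's argument: the maximizing index lies in $B^t$, every term in $S^t$ is at most $e^{\alpha \Phi^{\,t}_L} < e^{\alpha(\phi^t_{\max}-\Delta)}$, and $\alpha\Delta \ge \ln|I|$ gives a ratio of at least $1/2$. The only difference is that you compare $\sum_{i\in S^t}$ with $\sum_{i\in B^t}$ directly, which replaces, in a single line, the paper's case split on $|B^t|$ and its derivative-in-$p$ monotonicity argument.
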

\begin{PROOF} 
Consider the quantity $r \doteq\frac{\sum_{i \in B^t} e^{\alpha \phi_{i}(x^t|z^t)}}{\sum_{i \in I} e^{\alpha  \phi_i(x^{t}|z^t)}}$.  At least one of the terms in the numerator equals $e^{\alpha \phi_{\max}^t}$. If $|B^t| = 1$ then $r \ge \frac{e^{\alpha \phi_{\max}^t}}{e^{\alpha \phi_{\max}^t} + |S_t| e^{\alpha \Phi^{\, t}_L}} > \frac{1}{1 + |I| e^{-\alpha \Delta}}$ (because $\Phi^{\, t}_L < \phi_{\max}^t - \Delta$) and, by choice of $\alpha$, this ratio is larger than $1/2$ which concludes the proof.  Now assume $|B^t| \ge 2$, and let $p$ denote the \textit{smallest}   term in $\sum_{i \in B^t} e^{\alpha \phi_{i}(x^t|z^t)}$.  In other words, we can write $\sum_{i \in B^t} e^{\alpha \phi_{i}(x^t|z^t)} = p +  e^{\alpha \phi_{\max}^t} + q$, with $p, q \ge 0$, and so
$$ r \ \ge \ \frac{p + e^{\alpha \phi_{\max}^t} + q}{p + e^{\alpha\phi_{\max}^t} + q + |S_t| e^{\alpha \Phi^{\, t}_L}}. $$
The numerator of the derivative of this expression with respect to $p$ equals $|S_t| e^{\alpha \Phi^{\, t}_L}$ which is nonnegative, and so the expression                    is minimized when $p = 0$ and likewise with $q$. Thus, again, $r >  \frac{1}{1 + |I| e^{-\alpha \Delta}}$, as desired.
\end{PROOF}

Under appropriate conditions, Corollary \ref{co:finiteviol} implies finite termination of Algorithm \ref{alg:SOFTMAX-adv}. At any rate, this issue is addressed by the following result requires uniform continuity of the functions $\phi_i$ over $Z$. That is, for any $\epsilon>0$ there exists a $\gamma(\epsilon) > 0$ such that, for any pair $z, w \in Z$, $\|z-w\|<\gamma(\epsilon)$ implies $|\phi_i(x|z) - \phi_i(x|w)| < \epsilon$ for every $x \in P$ and any $i \in I$.

\begin{THM} \label{thm:finiteconv}
    Assume uniform continuity of the $\phi_i$ holds and let $\lambda\in(0,1)$.  Either Algorithm \ref{alg:SOFTMAX-adv} terminates finitely in line 6, or we reach an iteration $\tau$ with $\phi_{\max}^{\tau} \le \lambda \Phi(x^0)$.
\end{THM}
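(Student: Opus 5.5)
We argue by contradiction. Suppose the algorithm never stops in line 6, so it runs indefinitely (ignoring $t^{\max}$), and suppose $\phi_{\max}^t > \lambda \Phi(x^0)$ for every $t$. The goal is to show that two cuts generated at different iterations would then interact in a way that is impossible infinitely often.

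First, set up the ingredients.

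\begin{enumerate}
\item By Lemma \ref{le:varrhoupper}, $\Phi^{\,t}_L \le \Phi(x^0)$ for all $t$.
\item Each iteration adds a cut $\Phi_L \ge \sum_i \pi^t_i \phi_i(x|z^t)$. Corollary \ref{co:finiteviol} shows this cut is violated by $(x^t,\Phi^{\,t}_L)$ by at least $\Delta/4$. Note this uses only the failure of criterion (a), so criterion (b) is not needed.
\item Every later master solution $(x^s,\Phi^{\,s}_L)$, $s>t$, satisfies the cut added at iteration $t$:
\[
\Phi^{\,s}_L \ \ge\ \sum_i \pi^t_i \phi_i(x^s|z^t).
\]
\end{enumerate}

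Second, find two nearby iterations. $\cZ$ is compact and the weight vectors $\pi^t$ lie in the simplex, which is also compact. Hence the sequence $(z^t,\pi^t)$ has a convergent subsequence. Fix $\epsilon = \Delta/16$. Along that subsequence we can pick indices $t<s$ with
\[
\|z^s - z^t\| < \gamma(\epsilon) \quad\text{and}\quad \|\pi^s-\pi^t\|_1 \le \eta,
\]
for a small $\eta$ to be chosen.

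Third, compare the cuts at $x^s$. Write $v_t$ for the iteration-$t$ cut evaluated at $x^s$ and $v_s$ for the iteration-$s$ cut evaluated at $x^s$:
\[
v_t = \sum_i \pi^t_i\phi_i(x^s|z^t), \qquad v_s = \sum_i \pi^s_i\phi_i(x^s|z^s).
\]
Then:
\begin{itemize}
\item By uniform continuity, $\big|\sum_i \pi^t_i(\phi_i(x^s|z^t)-\phi_i(x^s|z^s))\big| < \epsilon$, since the $\pi^t_i$ are convex weights.
\item The change of weights contributes $\big|\sum_i(\pi^t_i-\pi^s_i)\phi_i(x^s|z^s)\big| \le \eta\,\phi_{\max}^s$.
\end{itemize}
This yields $v_s \le v_t + \epsilon + \eta\,\phi_{\max}^s$. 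But $\Phi^{\,s}_L \ge v_t$ because the old cut is present, and $v_s > \Phi^{\,s}_L + \Delta/4$ by the violation bound. Combining these gives $\Delta/4 < \epsilon + \eta\,\phi_{\max}^s$. If $\eta\,\phi_{\max}^s \le \Delta/16$, this is a contradiction, so termination must occur in finitely many steps.

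The main obstacle is controlling $\eta\,\phi_{\max}^s$, since $\phi_{\max}^s$ need not be bounded a priori: $x^s$ varies over $P$, which may be unbounded. This is where the alternative $\phi_{\max}^\tau \le \lambda\Phi(x^0)$ should enter, and I would try two routes.

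\emph{First route: bound $\phi^s_{\max}$ directly.} For iterations with $\phi_{\max}^s \le \Phi^{\,s}_L + \Delta$ the algorithm would already have stopped. In the remaining case I would try to bound $\phi_{\max}^s$ using $c(x^s)+\Theta\Phi^{\,s}_L \le c(x^0)+\Theta\Phi(x^0)$ together with boundedness of the objective. This is probably not enough on its own.

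\emph{Second route: avoid the weight-perturbation term.} Use uniform continuity in a different form. The cut from iteration $t$, evaluated at $(x^s, z^s)$, differs by at most $\epsilon$ from $\sum_i\pi^t_i\phi_i(x^s|z^s)$. Now $\pi^t$ and $\pi^s$ are both softmax weights, and softmax weights concentrate on near-maximal indices when $\alpha$ is large. So I would compare the largest coordinates of the two cuts rather than the full weight vectors. If both iterations put mass at least $1/2$ on their own near-maximal indices (Corollary \ref{co:finiteviol}), matching these sets along a subsequence — possible because $I$ is finite — makes the $\eta$ term unnecessary. The finiteness of $I$ lets us pass to a subsequence on which $B^t$ and the argmax index are constant.

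The role of $\lambda$ is to guarantee $\phi_{\max}^t$ stays bounded below by $\lambda\Phi(x^0)>0$. This prevents the degenerate case in which the relative violation collapses, which is precisely what excludes the alternative in the theorem.
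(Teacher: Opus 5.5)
\textbf{There is a genuine gap.} Your reduction is sound up to the inequality $\Delta/4 < \epsilon + \eta\,\phi^s_{\max}$, but the proposal stops exactly where an idea is needed: nothing you write controls $\eta\,\phi^s_{\max}$.

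Neither route closes it. The first fails for the reason you suspect: the master objective bounds only $c(x^s)$ and $\Phi^s_L$, never $\phi^s_{\max}$. The second does not remove the weight-perturbation term. Fixing $B^t$ and the argmax index $i^*$ along a subsequence only gives $\Phi^s_L \ge \pi^t_{i^*}(\phi^s_{\max}-\epsilon)$, with $\pi^t_{i^*}$ possibly as small as $1/|I|$. That is neither test (a) nor (b). Combined with Lemma \ref{le:varrhoupper} it would at least bound $\phi^s_{\max}$, but you do not draw that conclusion.

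Your closing remark on $\lambda$ does not connect to your argument. A \emph{lower} bound on $\phi_{\max}$ cannot help an inequality whose bad term grows with $\phi^s_{\max}$.

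The missing observation is that the violation of the new cut also grows with $\phi^s_{\max}$. The maximizing index has softmax weight at least $1/|I|$, the features are nonnegative, and $\Phi^s_L \le \Phi(x^0)$ by Lemma \ref{le:varrhoupper}. Hence
\[
\sum_i \pi^s_i\,\phi_i(x^s|z^s) - \Phi^s_L \ \ge\ \frac{\phi^s_{\max}}{|I|} - \Phi(x^0).
\]
Take $M = 2|I|(\Phi(x^0)+\Delta/4)$ and $\eta \le \min\{1/(2|I|),\,\Delta/(8M)\}$.
\begin{itemize}
\item If $\phi^s_{\max}\le M$, then $\epsilon+\eta\,\phi^s_{\max} \le 3\Delta/16 < \Delta/4$, which is below the violation by Corollary \ref{co:finiteviol}.
\item If $\phi^s_{\max} > M$, the right side of the display exceeds $\eta\,\phi^s_{\max}+\Delta/4$.
\end{itemize}
Either way the violation exceeds $\epsilon+\eta\,\phi^s_{\max}$, a contradiction. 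The repaired argument therefore gives finite termination outright through test (a), using the standing choice of $\alpha$ from Section \ref{subsec:analysis} behind Corollary \ref{co:finiteviol}. It never uses $\lambda$.

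The paper avoids the unboundedness issue by working with the scale-invariant relative test (b). It discretizes $(z^t,\pi^t)$: a finite $\gamma(\epsilon)$-net of $\cZ$, with $\pi^t$ rounded \emph{down} to multiples of $1/N^3$. Some discretized pair therefore repeats exactly. Because the rounding is one-sided and $\phi\ge 0$, the weights cost only a relative factor, not an additive term. A major/minor split of the indices, with $\alpha$ taken larger as a function of $N$, $\delta$ and $\Delta$, then yields $\Phi^\tau_L \ge (1-\delta/2)\,\phi^\tau_{\max} - 2\epsilon$. Every loss there is proportional to $\phi^\tau_{\max}$ except the additive continuity error $2\epsilon$. $\lambda$ enters only through $\epsilon = \lambda\Phi(x^0)\delta/4$, which makes $2\epsilon \le (\delta/2)\,\phi^\tau_{\max}$ whenever $\phi^\tau_{\max}\ge\lambda\Phi(x^0)$. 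That is the actual source of the dichotomy in the statement, and it is why a lower bound on $\phi_{\max}$ matters there but not in your route.
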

\begin{PROOF}
Let $n = |I|$ and define $N > \max\{n,  2/\sqrt\delta\}$. Choose 
$$\eta = \frac{\delta/2 - 2/N^2}{1 - 2/N^2}, \ \text{and} \ \alpha \ge \frac{3\log N}{\eta\Delta}.$$
\noindent Further,
\begin{itemize}
\item Define $\displaystyle \epsilon \ \doteq \ \frac{\lambda \phi^0_{U}  \delta}{4}.$
\item Let $\gamma(\epsilon)>0$ such that the uniform continuity of functions $\phi_i$ over Z holds for $\epsilon$. Define $Z_{\gamma(\epsilon)} \subseteq Z$ as a $\gamma(\epsilon)$-net for $Z$, i.e. for each $z \in Z$, there exists a $z' \in Z_{\gamma(\epsilon)}$ such that $\|z-z'\| \le \gamma(\epsilon)$. $Z_{\gamma(\epsilon)}$ (finite) is guaranteed to exist since $Z$ is compact. 
\end{itemize}
At each iteration $t$, define $\hat z^t$ and $\hat \pi^t$ as follows: 
\begin{itemize}
    \item Let $\hat z^t \in Z_{\gamma(\epsilon)}$ be the point whose $\gamma(\epsilon)$-ball covers $z^t$.
    \item For each $j$, let $\hat \pi^t_j$ be $\pi_j^t$ rounded down to the nearest integer multiple of $1/N^3$.
\end{itemize}

 For any $t$, we say that index $i$ is \textit{minor} if $\phi_i(x^t|z^t) \le (1-\eta)\phi_{\max}^t$, and \textit{major} otherwise. By our choice of $\alpha$, and since $\Delta < \phi_{\max}^t$ for every non-terminal iteration $t$, we have $e^{\alpha \phi_{\max}^t} > N^3e^{(1-\eta)\alpha\phi_{\max}^t}$. Using this bound, and denoting $\displaystyle \pi_{\max}^t = \max_{1\le i\le n}\pi_i^t$, we see that for all minor $i$, $\pi^t_i < \frac{1}{N^3}\pi^t_{\max}$. So
\begin{align*}
    \sum_{i \ minor}\pi^t_i < \frac{1}{N^2}\pi^t_{\max}\le \frac{1}{N^2}\sum_{j \ major}\pi^t_j.
\end{align*}
% $\delta < 1$ by assumption, and so $\eta < 1$. By definition of minor, there is a major index.  
Hence, since $\sum_j \pi^t_j = 1$,  
\begin{align*}
    \sum_{j \ major}\pi^t_j > 1 - \frac{1}{N^2} \ \ \text{and thus} \ \sum_{j \ major}\hat \pi^t_j > 1 - \frac{2}{N^2}, \ \text{and therefore}
\end{align*} 
% We get that $1 < (1 + 1/N^2) \sum_{j major} \pi^t_j$.  Also for $f < 1$, $1 - f < 1/(1 + f)$
\begin{align} \label{eq:bigtermbound}
    \sum_{j \ major} \hat \pi^t_j\phi_j(x^t|\hat z^t) \geq \sum_{j\ major} \hat \pi^t_j\phi_j(x^t| z^t) - \epsilon \geq (1-2/N^2)(1 - \eta)\phi_{\max}^t -\epsilon.
\end{align}
Since there are finitely many pairs $(\hat z^t, \hat \pi^t)$, the algorithm will eventually repeat a pair provided it has not already terminated. Let $\tau$ be such an iteration, meaning $\hat \pi^\tau = \hat \pi ^t$ and $\hat z^\tau = \hat z^t$ for some $t<\tau$. Hence
\begin{align*} 
    \Phi_L^\tau &\ge \sum_j \pi^t_j\phi_j(x^\tau| z^t) \ge \sum_j \hat\pi^t_j(\phi_j(x^\tau|\hat z^t) - \epsilon) \ge \sum_j \hat\pi^t_j\phi_j(x^\tau|\hat z^t) - \epsilon\\
    &= \sum_j \hat\pi^\tau_j\phi_j(x^\tau|\hat z^\tau) - \epsilon \ge (1- 2/N^2)(1-\eta)\phi_{\max}^\tau - 2\epsilon
\end{align*}
where the four inequalities follow, in order, from the cut added in iteration $t$, the definitions of $(\hat z^t, \hat \pi^t)$, the fact that the sum of the coordinates of $\hat \pi$ is at most 1, and the bound in (\ref{eq:bigtermbound}). 

Now suppose $\phi^\tau_{\max} \ge \lambda\phi_{U}^0$. Then $\displaystyle \frac{2\epsilon}{\phi^{\tau}_{\max}} \le \frac{2\epsilon}{\lambda \phi^0_{U}} \le \frac{\delta}{2}$ by our definition of $\epsilon$, and in summary,
\begin{align}
    \frac{\phi^\tau_{\max}-\Phi_L^\tau}{\phi^\tau_{\max}} \le \left[1-(1- 2/N^2)(1-\eta)\right] + \frac{2\epsilon}{\phi^\tau_{\max}} \le \frac{\delta}{2} + \frac{\delta}{2} \ = \ \delta
\end{align}
%Proof. We have that $\eta = \frac{\delta/2 - 2/N^2}{1 - 2/N^2} $. 
%So $(1 - \eta) = \frac{1 - \delta/2}{1 - 2/N^2}$, and $\left[1-(1- 2/N^2)(1-%\eta)\right] = 1 - (1 - \delta/2)$

\noindent by choice of $\eta$. Thus $(x^\tau,\Phi_L^\tau)$ satisfies the relative convergence condition (b) on line 6 and the algorithm will terminate at iteration $\tau$.

\end{PROOF}

\begin{CO} \label{co:reallyfinite} Suppose $\Phi_{\min} \doteq \min_{x \in P} \Phi(x) \ > \ 0$. Assuming
uniform continuity of the $\phi_i$, algorithm \ref{alg:SOFTMAX-adv} terminates finitely.
\end{CO}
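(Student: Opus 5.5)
We derive the corollary directly from Theorem \ref{thm:finiteconv} by choosing $\lambda$ small enough that the second alternative cannot occur. If $\Phi(x^0) = 0$, then $\Phi_{\min} = 0$, which contradicts the hypothesis. So $\Phi(x^0) \ge \Phi_{\min} > 0$, and we may set
$$\lambda \ \doteq \ \frac{\Phi_{\min}}{2\,\Phi(x^0)} \ \in \ (0,1),$$
the upper bound holding because $\Phi(x^0) \ge \Phi_{\min}$. We apply Theorem \ref{thm:finiteconv} with this $\lambda$. Together with the uniform continuity assumption, either the algorithm stops finitely in line 6, or it reaches an iteration $\tau$ with $\phi^\tau_{\max} \le \lambda \Phi(x^0) = \Phi_{\min}/2$.

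We rule out the second alternative. Since every iterate satisfies $x^\tau \in P$, we have $\Phi(x^\tau) \ge \Phi_{\min}$. Lemma \ref{le:phimaxok} gives $\Phi(x^\tau) \le \phi^\tau_{\max} + 2\Delta$. Combining these yields
$$\phi^\tau_{\max} \ \ge \ \Phi_{\min} - 2\Delta,$$
which exceeds $\Phi_{\min}/2$ as soon as $\Delta < \Phi_{\min}/4$. In that regime the second alternative is impossible, so the algorithm terminates finitely in line 6.

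The main obstacle is that the tolerance $\Delta$ is an input and need not satisfy $\Delta < \Phi_{\min}/4$. When $\Delta \ge \Phi_{\min}/4$, we instead return to the mechanism inside the proof of Theorem \ref{thm:finiteconv}. That proof shows that once a pair $(\hat z^t,\hat \pi^t)$ repeats, the relative criterion (b) holds at iteration $\tau$ provided $2\epsilon/\phi^\tau_{\max} \le \delta/2$. With the same constructions, we would redefine
$$\epsilon \ \doteq \ \frac{\delta\,\Phi_{\min}}{8}.$$
We would then show $\phi^\tau_{\max} \ge \Phi_{\min}/2$ for every non-terminal iteration, using Lemma \ref{le:phimaxok} together with the fact that non-termination under (a) gives $\phi^\tau_{\max} > \Phi^{\,\tau}_L + \Delta \ge \Delta$. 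Hence $\phi^\tau_{\max} > \max\{\Delta, \Phi_{\min} - 2\Delta\} \ge \Phi_{\min}/3$, which makes the needed ratio bound hold uniformly in $\tau$, with the constant in $\epsilon$ adjusted to match.

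Because the net $Z_{\gamma(\epsilon)}$ is finite and the grid of rounded weights $\hat \pi$ is finite, a pair must repeat after finitely many iterations. At that iteration criterion (b) forces termination. This gives finiteness without any case split on $\lambda$.
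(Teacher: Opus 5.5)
Your proof is correct and uses the same idea as the paper's one-line argument: apply Theorem~\ref{thm:finiteconv} with $\lambda$ small enough that its second alternative is incompatible with $\Phi \ge \Phi_{\min}$ on $P$. Where you differ is in care, and the care is warranted. The paper takes any $\lambda < \Phi_{\min}/\Phi(x^0)$, which tacitly treats $\phi^\tau_{\max}$ as if it were at least $\Phi_{\min}$. Lemma~\ref{le:phimaxok} only gives $\phi^\tau_{\max} \ge \Phi_{\min} - 2\Delta$, so the paper's choice of $\lambda$ does not by itself give a contradiction when $\Delta$ is comparable to $\Phi_{\min}$.

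You need neither the case split on $\Delta$ nor to reopen the theorem's proof. At any iteration $\tau$ that does not stop in line 6, you have $\phi^\tau_{\max} > \Phi^{\,\tau}_L + \Delta \ge \Delta$ and $\phi^\tau_{\max} \ge \Phi_{\min} - 2\Delta$, hence $\phi^\tau_{\max} > \Phi_{\min}/3$. Now apply Theorem~\ref{thm:finiteconv} as stated with $\lambda = \Phi_{\min}/(3\Phi(x^0)) \in (0,1)$. The iteration $\tau$ it produces has $\phi^\tau_{\max} \le \lambda\Phi(x^0) = \Phi_{\min}/3$, so the algorithm must stop there. The $\epsilon$ the theorem's proof uses for this $\lambda$ is $\delta\Phi_{\min}/12$, which is exactly your adjusted constant.

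Finally, drop the intermediate claim that $\phi^\tau_{\max} \ge \Phi_{\min}/2$ at every non-terminal iteration. With $\Delta = \Phi_{\min}/3$, for instance, $\phi^\tau_{\max}$ can lie just above $\Phi_{\min}/3$. Then $\epsilon = \delta\Phi_{\min}/8$ would not give the bound $2\epsilon/\phi^\tau_{\max} \le \delta/2$. Only the $\Phi_{\min}/3$ bound with $\epsilon = \delta\Phi_{\min}/12$ is justified.
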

\begin{PROOF}
Apply Theorem \ref{thm:finiteconv} using any $\lambda < \Phi_{\min}/\Phi^0_U$.
\end{PROOF}
\subsubsection{Choosing $\Theta$} \label{subsubtheta}
We first discuss a generic methodology for picking the $\Theta$ parameter.  We will consider specific examples below.
Let $x^* = \argmin\{ c(x) \, : \, x \in P \}$, i.e., an optimal solution to the nominal problem \eqref{eq:generic} (without loss of generality, $x^* = x^0$).  Let $0 < \lambda^{\text{lo}} < \lambda^{\text{hi}} \le 1$ and $0 < \xi$. Then we set 
\begin{align} \label{eq:formaltheta}
& \Theta \ = \ \frac{c(x^*) \,  \xi}{\Phi(x^*) \,(\lambda^{\text{hi}} - \lambda^{\text{lo}})}.
\end{align}
\begin{LE}\label{le:formaltheta}
Suppose we run Algorithm \ref{alg:SOFTMAX-adv} 
using \eqref{eq:formaltheta}.  Assume that the 
algorithm does terminate in line 6 at iteration $t$.  
\begin{itemize} 
\item [(i)] If $c(x^t) + \Theta \, \Phi(x^t) \ \le \ c(x^*) + \Theta \,  \lambda^{\mathrm{hi}} \Phi(x^*),$ then 
\begin{align} \label{eq:goodrisk}
 \Phi(x^t) \le \lambda^{\mathrm{hi}} \Phi(x^*) \quad \text{and} \quad c(x^t) \, \le \, c(x^*)\left(1 + \frac{\lambda^{\mathrm{hi}}}{\lambda^{\mathrm{hi}} - \lambda^{\mathrm{lo}}} \xi \right).
\end{align}
\item [(ii)] If  $c(x^t) + \Theta \, \Phi_L^t \ > \ c(x^*) + \Theta \, \lambda^{\mathrm{hi}} \Phi(x^*)$, then 
\begin{align}
    \not \exists x \in P \ \text{with} \ \Phi(x) \le \lambda^{\mathrm{lo}} \Phi(x^*) \ \ \text{and} \ \ c(x) \le (1 + \xi) c(x^*).
\end{align} 
\end{itemize}
\end{LE}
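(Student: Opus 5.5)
Both parts rest on two facts. First, $x^* = x^0$ minimizes $c$ over $P$, so $c(x^t) \ge c(x^*)$ for every $t$. Second, by the proof of Lemma \ref{le:varrhoupper}, every pair $(x,\Phi(x))$ with $x \in P$ is feasible for the master problem at iteration $t$. Hence $(x^t,\Phi^{\,t}_L)$ is a lower bound certificate:
\begin{align*}
c(x^t) + \Theta\,\Phi^{\,t}_L \ \le\ c(x) + \Theta\,\Phi(x) \quad \text{for all } x \in P .
\end{align*}
The choice \eqref{eq:formaltheta} is arranged so that $\Theta\,\Phi(x^*)(\lambda^{\mathrm{hi}}-\lambda^{\mathrm{lo}}) = \xi\, c(x^*)$. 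This identity converts impact differences measured in units of $\Phi(x^*)$ into cost differences measured in units of $c(x^*)$. Termination in line 6 is not used in any essential way; it only explains why $\Phi(x^t)$ and $\Phi^{\,t}_L$ are close, by Corollary \ref{co:inbetween}.

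For (i), I use $c(x^t) \ge c(x^*)$ in the hypothesis, which gives $\Theta\,\Phi(x^t) \le \Theta\,\lambda^{\mathrm{hi}}\Phi(x^*)$. Since $\Theta > 0$, this yields $\Phi(x^t) \le \lambda^{\mathrm{hi}}\Phi(x^*)$. For the cost bound, I drop the nonnegative term $\Theta\,\Phi(x^t)$ (the $\phi_i$ are nonnegative), so that
\begin{align*}
c(x^t) \ \le\ c(x^*) + \Theta\,\lambda^{\mathrm{hi}}\Phi(x^*) \ =\ c(x^*)\Bigl(1 + \frac{\lambda^{\mathrm{hi}}}{\lambda^{\mathrm{hi}}-\lambda^{\mathrm{lo}}}\,\xi\Bigr)
\end{align*}
after substituting \eqref{eq:formaltheta}.

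For (ii), I argue by contradiction. Suppose some $x \in P$ has $\Phi(x) \le \lambda^{\mathrm{lo}}\Phi(x^*)$ and $c(x) \le (1+\xi)c(x^*)$. Combining the lower bound certificate with the hypothesis of (ii) gives
\begin{align*}
c(x^*) + \Theta\,\lambda^{\mathrm{hi}}\Phi(x^*) \ <\ c(x) + \Theta\,\Phi(x) \ \le\ (1+\xi)c(x^*) + \Theta\,\lambda^{\mathrm{lo}}\Phi(x^*).
\end{align*}
Rearranging, this says $\Theta\,\Phi(x^*)(\lambda^{\mathrm{hi}}-\lambda^{\mathrm{lo}}) < \xi\, c(x^*)$, which contradicts the equality built into \eqref{eq:formaltheta}.

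The main point to check is the feasibility claim $(x,\Phi(x))$ for the master problem. Each cut reads $\Phi_L \ge \sum_i \pi^{t'}_i\phi_i(x|z^{t'})$, and the right-hand side is at most $\max_i \phi_i(x|z^{t'}) \le \Phi(x)$; this is the argument of Lemma \ref{le:rhoLbelos}. I would also note that $c(x^*) > 0$ is implicitly assumed, so that $\Theta > 0$ and the relative bounds make sense. Finally, the "within tolerances" caveat in the introduction refers to the gap between $\Phi^{\,t}_L$ and $\Phi(x^t)$, which Corollary \ref{co:inbetween} controls: upon termination, at least one of the two cases applies up to a $3\Delta$ or $\delta$ slack.
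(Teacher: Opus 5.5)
Your proposal is correct and follows the paper's proof. In (i) you use $c(x^t)\ge c(x^*)$ and $\Phi(x^t)\ge 0$. In (ii) you combine feasibility of $(x,\Phi(x))$ in the master problem, optimality of $(x^t,\Phi_L^t)$, and the identity $\Theta\,\Phi(x^*)(\lambda^{\mathrm{hi}}-\lambda^{\mathrm{lo}})=\xi\,c(x^*)$; this is the paper's chain of inequalities, which reads the contradiction as violating the optimality of $x^t$ rather than the identity.
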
  
\noindent \textit{Proof.} (i) Since $c(x^t) \ge c(x^*)$ we obtain the first inequality in \eqref{eq:goodrisk}. Moreover, $\Phi(x^t) \ge 0$ implies
\begin{align} \label{eq:goodcost}
&c(x^t) \, \le \, c(x^*) + \lambda^{\text{hi}} \frac{c(x^*) \,  \xi}{ \lambda^{\text{hi}} - \lambda^{\text{lo}}} \ = \ c(x^*)\left(1 + \frac{\lambda^{\text{hi}}}{\lambda^{\text{hi}} - \lambda^{\text{lo}}} \xi \right).
\end{align}
\noindent (ii) If such an $x$ existed, we would have 
$c(x) + \Theta \,\Phi(x) \le (1 + \xi)c(x^*) + \Theta \lambda^{\text{lo}} \Phi(x^*) \ = \ c(x^*) + \Theta \, \lambda^{\text{hi}}\Phi(x^*).$ 
%$\xi c(x^*) + \Theta \lambda^{\text{lo}}\Phi(x^*) = \xi c(x^*) (1 + \frac{\lambda^{lo}}{\lambda^{hi} - \lambda^{lo}}) = \xi c(x^*) \frac{\lambda^{hi}}{\lambda^{hi} - \lambda^{lo}} = \Theta \lambda^{hi} \Phi(x^*)$
This contradicts the fact that $x^t$ is an optimal solution to the master problem in the last iteration of the algorithm.

\qed
  
\noindent {\bf Remark}. Note that case (i) concerns $\Phi(x^t)$ whereas (ii) uses $\Phi_L^t$ ( $\le \phi_{\max}^t \le \Phi(x^t)$). However, as  per  Lemma \ref{le:phimaxok} and the termination criterion,   $\Phi(x^t)$, $\Phi_{\max}^t$  and $\Phi_{L}^t$ are all "within tolerance" of each other (in many of our numerical examples, the differences are near zero). In other words, the algorithm either generates a solution with both greatly reduced risk exposure and slightly increased cost, or proves that there is no point with moderately tighter requirements.  As an additional point, exact solution of the master problem is only needed for part (ii).\\

\noindent {\bf Example.} Let $\lambda^{\text{lo}} = 0.5$,
$\lambda^{\text{hi}} = 0.6$ and $\xi = 0.01$.  
Under outcome (i) $\Phi(x^t) \le 0.6 \, \Phi(x^*)$ while
also $c(x^t) \le 1.06 \, c(x^*)$.  And under outcome (ii) there is no $x \in P$ with $\Phi(x) \le 0.5 \,\Phi(x^*)$ and
$c(x) \le 1.01 \, c(x^*)$.  \\

\noindent {\bf Discussion points.} \\
\noindent (i) Lemma \ref{le:formaltheta} can be used to develop a number of scaling or binary search algorithms that  compute a
vector with, both, risk exposure smaller than that of $x^*$ by an approximate margin, and cost ramp-up that is likewise approximately constrained; or prove that no 
such vector exists. \\
\noindent (ii) The analysis in Lemma \ref{le:formaltheta} assumes that the algorithm terminates in line 6.  If not, by Theorem \ref{thm:finiteconv}, after a finite number of iterations
we compute a point $\tilde x \in P$ with $\Phi(\tilde x) \le \lambda^{\text{lo}} \Phi(x^*)$.  Should it be the case that $c(\tilde x)$ is only slightly larger than $c(x^*)$ then (as in case (i) of the analysis directly above) we
have computed a point with significantly lower risk exposure and slightly higher cost, as desired. 

What if not, that is to say, what if $c(\tilde x)$ is 
larger than $c(x^*)$ by a margin that is unacceptably high? In that case we appeal to Observation \ref{ob:flip}, i.e., there is \textit{no} $x \in P$ with,
both, $c(x) < c(\tilde x)$ and $\Phi(x) \le \Phi(\tilde x)$.  This is a similar situation as in case (ii) above -- here we have a proof that the goal that we decrease
risk exposure to as much as $\Phi(\tilde x)$ requires
a cost increase deemed excessive.

\subsubsection{Note: the cases $\cZ = \emptyset$ and $|\cZ| = 1$}
In either of these cases the optimization step in line 5 is void.  Hence in line 8 we
simply define $\pi^t_i = \frac{e^{\alpha \phi_i(x^t)}}{\sum_{j \in I} e^{\alpha \phi_j(x^t)}}$ for each $i \in I$.
\subsubsection{Modifications to the algorithm} \label{subsec:modifications}
Here we describe a number of adaptations to Algorithm \ref{alg:SOFTMAX-adv} that we have found effective from an empirical standpoint. In some cases,
those modifications are also backed by theoretical guarantees.

Formally, a boosting kernel will be any algorithm that generates, at an iteration $t$, nonnegative \textit{boosting weights} $w^t_i$ for each $i \in I$, and a separation kernel on the other hand generates coefficients $\pi^t_i = \pi^t_i(w^t)$ with $\sum_{i \in I} \pi^t_i = 1.$  A key point is that, for any separation kernel and any $\bar z \in \cZ$, the 
inequality  $\Phi_L \ \ge \ \sum_{i \in I} \pi^t_i \phi_i(x | \bar z)$ is valid for the risk-adjusted problem \eqref{eq:maxfeatures-epi}.  

Using this language, the boosting kernel in Algorithm \eqref{alg:SOFTMAX-adv} yields
the boosting weights $\phi_i(x^t | z^t)$ where $z^t$ is an approximate solution to the log-sum-exp maximization problem in line 5. The separation kernel applies the softmax function to the quantities $\phi_i(x^t|z^t)$.
However, any combination of a boosting and a separation kernel yields a valid algorithm, and its effectiveness should be judged from an empirical perspective.  We have experimented with several such variants, as follows.\\

\noindent {\bf Greedy method.}  Here the boosting kernel simply selects
$$ (z^t, i^t) \ = \ \argmax_{z \in \cZ, \, i \in I} \phi_i(x^t|z)$$
and assigns weights $w^t_{i^t} = 1$ and $w^t_{i} = 0$ for all other $i$, and the separation kernel sets $\pi^t_{i^t} = 1$ and  $\pi^t_{i} = 0$ otherwise, that is to say the cut added on line 9 is:
$$\Phi_L \ \ge \ \phi_{i^t}(x | z^t).$$
Note that $\pi^t = \text{SOFTMAX}(1,w^t).$   

\begin{LE} \label{le:finiteconvGREEDY}
    Let $\pi\in(0,1)$. Consider Algorithm \ref{alg:SOFTMAX-adv} using the greedy method, and suppose $0 < \lambda < 1$. Either this algorithm terminates finitely in line 6, or we reach an iteration $t$ with $\phi_{\max}^t \le \lambda \phi_{U}^0$.
\end{LE}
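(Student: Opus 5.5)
We adapt the pigeonhole argument from the proof of Theorem \ref{thm:finiteconv}; the greedy kernel makes it simpler, because the cut weights are already discrete. With greedy boosting, $(z^t,i^t)$ maximizes $\phi_i(x^t|z)$ over $\cZ \times I$. Hence $\phi_{\max}^t = \Phi(x^t)$ exactly, and the cut added at iteration $t$ is $\Phi_L \ge \phi_{i^t}(x|z^t)$. Note that the statement's $\pi\in(0,1)$ plays no role; only $\lambda$ matters.

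\textbf{Setup.} Put $\epsilon \doteq \lambda \phi^0_U \delta/2$, where $\phi^0_U = \Phi(x^0)$. Take $\gamma(\epsilon)$ from the uniform continuity assumption, and let $Z_{\gamma(\epsilon)}$ be a finite $\gamma(\epsilon)$-net of the compact set $\cZ$. To each iteration $t$ we attach the label $(\hat z^t, i^t)$, where $\hat z^t \in Z_{\gamma(\epsilon)}$ satisfies $\|z^t - \hat z^t\| \le \gamma(\epsilon)$. There are at most $|Z_{\gamma(\epsilon)}|\cdot|I|$ labels.

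\textbf{Pigeonhole step.} Suppose the algorithm has not stopped in line 6 and no iteration has yet had $\phi_{\max}^t \le \lambda\phi_U^0$. After at most $|Z_{\gamma(\epsilon)}|\,|I|+1$ iterations some label repeats: there are $t<\tau$ with $i^t=i^\tau=i$ and $\hat z^t=\hat z^\tau$. Then $\|z^t - z^\tau\| \le 2\gamma(\epsilon)$. To stay within a single modulus, one can instead take $\gamma(\epsilon)/2$-nets, or simply absorb the factor 2 into $\epsilon$; I would do the latter. The cut from iteration $t$ is present at iteration $\tau$, so uniform continuity (applied at $x=x^\tau$) gives
\[
\Phi_L^\tau \ \ge\ \phi_i(x^\tau|z^t) \ \ge\ \phi_i(x^\tau|z^\tau) - 2\epsilon \ =\ \phi_{\max}^\tau - 2\epsilon .
\]

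\textbf{Conclusion.} Since $\phi_{\max}^\tau > \lambda\phi_U^0$, we get $(\phi_{\max}^\tau - \Phi_L^\tau)/\phi_{\max}^\tau \le 2\epsilon/(\lambda\phi_U^0) = \delta$. This is stopping criterion (b) of line 6, so the algorithm terminates at iteration $\tau$. Hence either some iteration has $\phi_{\max}^t \le \lambda\phi_U^0$, or the algorithm stops within finitely many iterations.

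\textbf{Main obstacle.} The only delicate point is the uniform continuity bookkeeping. The two points $z^t$ and $z^\tau$ are each within $\gamma$ of the same net point, so they can be up to $2\gamma$ apart. Uniform continuity must also be applied at the later iterate $x^\tau$, not at $x^t$; this is allowed because the modulus is uniform over $x\in P$. Choosing the net at radius $\gamma(\epsilon)/2$ removes the factor 2 cleanly.
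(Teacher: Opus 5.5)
Your proof is correct and takes the paper's approach: it is the pigeonhole argument from the proof of Theorem~\ref{thm:finiteconv}, specialized to greedy cuts (the paper states this lemma without a separate proof). You label iterations by $(\hat z^t, i^t)$, and a repeated label gives $\Phi_L^\tau \ge \phi_{\max}^\tau - 2\epsilon$ through the shared net point; because greedy cut weights are already unit vectors, you correctly drop the rounding of $\pi^t$ and the minor/major split, so the whole $\delta$ budget goes to the continuity loss and $\epsilon = \lambda\phi^0_U\delta/2$ suffices instead of the theorem's $\lambda\phi^0_U\delta/4$.
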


\noindent{\bf Comment.} As compared with log-sum-exp, greedy boosting is simpler and can provide adequate performance. However log-sum-exp boosting is in general preferred (over greedy) because each resulting cut incorporates many features. Lemma \ref{le:finiteconvGREEDY} notwithstanding, we have observed, empirically, that log-sum-exp boosting yields faster termination, especially in high-dimensional settings where there are many ties or near-ties among features attaining high $\phi(x^t|z^t)$ value in line 5 at iteration $t$. \\

\noindent {\bf Flattening.} The purpose of the boosting step at an iteration $t$ is to highlight 
larger feature values in the resulting cut deployed in the separation step (i.e., larger $\phi_i(x^t|z^t)$ results in larger $\pi^t_i$). However, while the boosting accentuates order-of-magnitude differences in feature values, we
also want the boosting to be less sensitive to small differences.  Additionally, the numerical task in line 5 may become difficult when feature values are high.  \textit{Flattening}, when deployed, addresses both goals. It replaces each nominal feature value $\phi_i(x^t|z)$ by its logarithm 
$\ln \phi_i(x^t|z)$.\\

\noindent {\bf Synthetic boosting.}  When $\cZ = \emptyset$, Algorithm \ref{alg:SOFTMAX-adv} skips the boosting step altogether.  An alternative 
is to rely on a \textit{synthetic} set  $Z^{\text{synth}} \subset \R^I_+$,
and to define (for example) $\phi_i(x | z) = z_i + \phi_i(x)$.  Hence
the boosting step approximately solves 
\begin{align} \label{eq:synthboost}
 \max_{z \in \cZ^{\text{synth}}} \sum_{i \in I} e^{\alpha(z_i + \phi_i(x^t))}.
\end{align}

\noindent Appropriate examples for $\cZ^{\text{synth}}$ include ``budgets" sets
$\{ z \in \R_+^I \, : \, z_i \le \gamma_i \ \forall i, \, \sum_i z_i \le \Gamma \}$ and 
ball sets $\{ z \in \R_+^I \, : \sum_i z^2_i \le \Gamma\}$. \\

\noindent {\bf Log-barrier term.}  When using synthetic boosting, we have found it 
empirically useful to add, to the objective function \eqref{eq:synthboost} log-barrier terms to prevent the optimal $\zeta_i$ values from reaching an extreme point of the set $\cZ^{\text{synth}}$, i.e., to encourage the solution to the maximization problem to boost multiple features, which will as a result appear in the corresponding cut. 

When, e.g., $\cZ$ is the budgets set described above, the log-barrier contribution added to the boosting problem takes the form $\epsilon \ln(\Gamma - \sum_i z_i) \ + \ \sum_i \epsilon_i\ln(1 - z_i)$ for
positive constants $\epsilon, \, \epsilon_i$ ($i \in I$).\\

\noindent {\bf Clipping.} Depending on the separation kernel, the cuts we obtain may be very dense when $|I|$ is large.  This fact
hinders the solution of the master problem.  Additionally, many of the
cut coefficient may be very small, further challenging solvers.  \textit{Clipping}, at an iteration $t$, works as follows:
\begin{enumerate}
\item Pick a (small) integer $K$.
\item Reset all but the top $K$ values $\pi^t_i$ to zero.
\item The top $K$ $\pi^t_i$ are proportionally reweighed so that they add to $1$.
\end{enumerate}
When $K = 1$, this amounts to using the cut $\Phi_L \ge \phi_{i^t}^t(x|z^t)$, where $i^t = \argmax{ \phi_i(x^t | z^t)}$, which coincides with the greedy cut. 

\subsection{Introductory examples}\label{subsec:introex}
Here we present simple examples where Algorithm \ref{alg:SOFTMAX-adv} or heuristics are applied to optimization problems.  More substantial computations are presented later.

\subsubsection{Flow routing with queueing delays}

Consider a minimum-cost flow problem with a single source
$s$ and a single destination $t$. There are $m$ arcs, and each arc $i$ has a capacity $u_i$ and a per-unit flow cost $c_i$, and there
are $M > 0$ units of flow to route from $s$ to $t$.

In addition, when flow $x_i$ is routed on arc $i$, the \textit{queueing delay} experienced on this arc equals $$ \phi_i(x_i) = \mu(u_i, x_i) \doteq \frac{u_i}{u_i + \epsilon - x_i},$$ where $\epsilon > 0$ (and small) is  used to avoid division by zero\footnote{The function $\mu(u, x)$ is the M/M/1 queuing delay on a channel with capacity $u$ and data rate $x$. See \cite{kleinrock75}}. 

We apply Algorithm \eqref{alg:SOFTMAX-adv} using the cost function $ c(x) \doteq \sum_{i} c_i x_i$ and impact function $\Phi(x) = \max_{i} \phi_i(x_i)$.  (Thus, $\cZ = \emptyset$.)

As an example, consider the network with two nodes (i.e., $s$ and $t$) and three parallel arcs between $s$ and $t$ with data as follows
\[
\begin{array}{r|cccc}
\text{arc}, i & 1 & 2 & 3 & 4\\
\hline
\text{capacity}, \ u_i & 70 & 31 & 50 & 50 \\
\text{per-unit cost}, \ c_i & 1 & 5 & 7 & 8
\end{array}
\] 
There are $M = 100$ units of flow to route. Thus, the nominal optimization to solve is:
\begin{subequations} \label{eq:example1nominal}
\begin{align}
\min & \ x_1 + 5 x_2  + 7 x_3 + 8 x_4  \nonumber \\
\text{s.t.} & \ x_1 + x_2 + x_3 + x_4 \ = \ 100 \nonumber \\
& \ 0 \le x_1 \le 70, \ 0 \le x_2 \le 31, \ 0 \le x_3 \le 50, \ 0 \le x_4 \le 50, \nonumber 
\end{align}
\end{subequations}
with optimal solution $x^0 = (70, 30, 0, 0)$ attaining cost $220$. We set $\epsilon = 0.01$. Thus, 
the maximum queueing delay is attained on arc $1$ and it equals $\mu(70,70) = 100.$

To obtain the master problem we need to add the $\Phi_L$ variable to the nominal optimization problem; additionally, we need to represent the quantity $\phi_i(x_i)$ for each $i = 1, 2, 3$.
This is done by introducing a variable $q_i$ as a proxy for $\phi_i(x_i)$ together with the inequalities
$$ q_i \ \ge \ \mu(u_i, x_i) \quad (= u_i/(u_i + \epsilon - x_i) ), \quad i=1,2,3,$$
which are convex-representable.

Following section \ref{subsubtheta} to set $\Theta$, we choose $\lambda^L = 0.5$, $\lambda^U = 0.6$ and $\xi = 0.01$. Using  $c(x^0) = 220$ and $\Phi(x^0) = 100$, we obtain $\Theta = 0.22$.  
In summary, the master problem is initialized as:
\begin{subequations} \label{eq:master1}
\begin{align}
\min & \ x_1 + 5 x_2  + 7 x_3 + 8 x_4 \ + \ 0.22 \, \Phi_L\nonumber \\
\text{s.t.} & \ x_1 + x_2 + x_3 + x_4 \ = \ 100 \nonumber \\
& \ q_1 \ge \mu(70, x_1), \ q_2 \ge \mu(30, x_2), \ q_3 \ge \mu(50, x_3), \ q_4 \ge \mu(50, x_4) \nonumber\\ 
& \ 0 \le x_1 \le 70, \ 0 \le x_2 \le 31, \ 0 \le x_3 \le 50, \ 0 \le x_4 \le 50, \ \Phi_L \ge 0. \nonumber 
\end{align}
\end{subequations}

Finally, we heuristically set $\alpha = 1/10$.  The algorithm proceeds as follows.  \\

\noindent {\bf 1)} 
Since $\cZ = \emptyset$, line 5 of the algorithm simply amounts to evaluating $\Phi(x^0)$. As per line 8 of the algorithm, the cut we obtain on line 9 is (values rounded to three digits)
$$ \Phi_L \ge  0.980 \, q_1 + 0.018\, q_2, $$
where we have ignored very small (but positive) coefficients on $x_3$ and $x_4$.\\

\noindent {\bf 2)} Resolving the master problem with the added cut yields the solution vector 
$x^1_1 = 69.005$, $x^1_2 = 30.995$, $x^1_3 = x^1_4 = 0$ and $\phi^1_L = 41.266$;  its cost is $c(x^1) = 223.980$. Evaluating queueing delays, we have $\Phi(x^1) = 41.297$.\\

\noindent We note the small gap between $\phi^1_L$ and $\Phi(x^1)$: the formal algorithm is close to termination as per line 6.  However, the central point here is that
\begin{itemize}
\item [(a)] Comparing $\Phi(x^0) = 100$ and $\Phi(x^1) < 41.30$ we see  greatly reduced risk.
\item [(b)] Comparing $c(x^0) = 220$ and $c(x^1) < 223.99$, we see that cost has increased by less than $1.5 \%$.
\end{itemize}
As per our goal of "substantially reducing risk without materially increasing cost" we can terminate the algorithm.  Note that both \eqref{eq:goodrisk} and \eqref{eq:goodcost} are satisfied.

\subsubsection{Min-cost flow under capacity reduction}
Consider the generic single-source, single-destination min-cost flow setup  with $M$ units of flow to route.  Here we consider the case where there is a set $\cZ$ of \textit{decreased capacity vectors}. Each $z \in \cZ$ has an entry $z_{uv}$ corresponding to each arc $(u,v)$,  with value  indicating the capacity of $(u,v)$ in an adverse scenario that is realized after the choice of flow vector $x$ has been made, and which impacts multiple arcs at once.

 Specifically, given a flow vector $x$, we assume that the capacity in each arc $(u,v)$ is (adversarially) reduced to $\min\{x_{uv}, z_{uv}\}$.  As a result,
 some of the $M$ units flow originally routed from $s$ to $t$ using flow vector $x$ may be lost, even if we
 reroute using the reduced capacities.  This model incorporates a form of recourse which we have selected for this example to highlight the behavior of our algorithm.  An alternative would have been to use capacities $z_{uv}$ rather than $\min\{x_{uv},z_{uv}\}$ -- the version
we selected is more constrained in terms of recourse.

\begin{wrapfigure}{r}{0.25\textwidth}
  \begin{center}
    \includegraphics[width=0.25\textwidth]{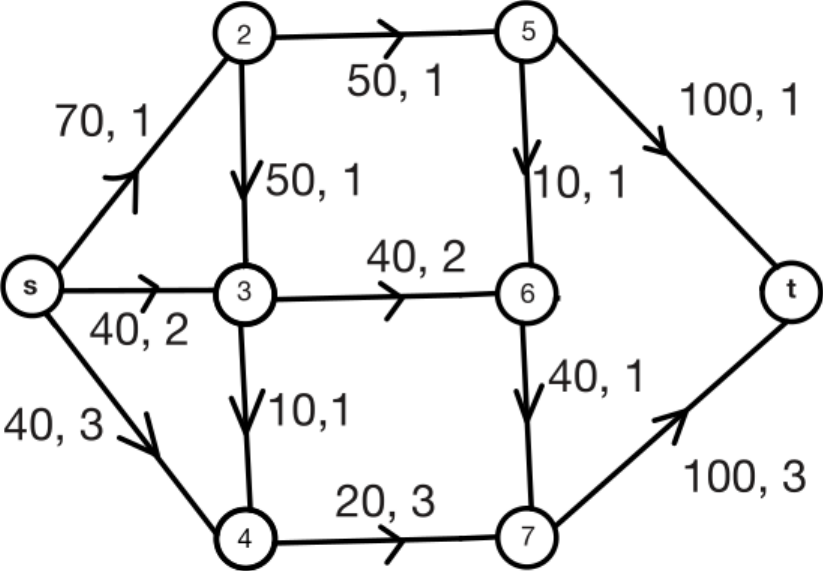}
  \end{center}
  \caption{Reduced capacity example. For each arc we list capacity and per unit cost}
  \label{fig:initial-cap}
\end{wrapfigure}
Formally, given a flow vector $x$ we proceed as follows.  For each vector $z \in \cZ$,
\begin{itemize}
\item [(a)] Let $F(x|z)$ denote the maximum flow that can be routed from $s$ to $t$ in the network where
each arc $(u,v)$ has \textit{effective capacity}  $\min\{x_{uv},z_{uv}\}$. Then $F(x|z) \le M$.
\item[(b)] For each subset of the nodes $S$ with $s \in S$ and
$t \not \in S$ let $\delta^+(S)$ be the set of arcs $(u, v)$ with $u\in S$ and $v \not \in S$.  We write $\text{cap}_S(x|z) = \sum_{(u,v) \in \delta^+(S)}\min\{x_{uv},z_{uv}\} $ to denote the \textit{effective capacity} of the cut, i.e., the cut-capacity using the arc capacities 
in (a). For such a cut define $\phi_S(x|z) = \max\{M - \text{cap}_S(x|z), 0\}$. In our terminology, there is a feature for each cut.

\item [(c)] Let $\cC$ be the set of cuts as in (b). We set $\phi(x|z) \ = \ \max_{S \in \cC} \phi_S(x|z)$.  We have $\phi(x|z) > 0$ if the  effective capacities $\min\{x_{uv},z_{uv}\}$ do not allow $M$ units of flow to be routed from $s$ to $t$.
\end{itemize} 
In summary, we write $\Phi(x) \ = \ \max_{z \in \cZ} \phi(x|z)$ where we assume that  $\cZ$ is compact\footnote{If $\cZ$ is finite and each $z \in \cZ$ has a probability $p_z$ then we could instead consider $\Phi(x) = \sum_{z \in \cZ} p_z \phi(x|z)$. An example is that where we allow a full min-cost flow recomputation once the decreased capacities are realized.}. If $\cZ$ is finite the risk-adjusted problem \eqref{eq:generic-weighted} has an explicit linear representation.  However, if $\cZ$ is large this representation will be prohibitively large both in terms of constraints and variables.

Next we describe our implementation of these ideas in a numerical example, given by the network in Figure \ref{fig:initial-cap}, where we have to route $M = 100$ units of cost from $s$ to $t$.

%\begin{figure}[H]
%        \includegraphics[scale=0.5]{numbers1.pdf}
%        \centering
%        \caption{Reduced capacity example. For each arc we list capacity and per unit cost.}
%        \label{fig:initial-cap}
%\end{figure}
For the sake of simplicity, the reduced capacity model we use is that in each $z \in \cZ$ up to two arcs of the network have their capacity reduced by (exactly) $50 \%$ with no other reductions. For a flow vector $x$ the computation of $\Phi(x)$ can be modeled by a linear mixed-integer program, using binary variables to indicate the arcs where capacity is reduced, and using the dual representation of the min-cut problem -- the \textit{dual MIP} in what follows.

We use the greedy method (Section \ref{subsec:modifications}) and we add only one 
inequality to the master at each iteration.  After initializing the master problem as the min cost flow problem on the given network the algorithm proceeds as follows:\\

\noindent {\bf Iteration 0.  Nominal solution $x^0$.}
\[
\begin{array}{r|cccccccccccc}
\text{arc} & (s,2) & (s,3) & (s,4) & (2,3) & (3,4) & (2,5)& (3,6)& (4,7)& (5,6) & (6,7) & (5,t) & (7,t)\\
\hline
\text{flow} & 60 &40  & 0 & 10  & 10 & 50 & 40 & 10 & 0 & 40 & 58 & 50
\end{array}
\] 
The optimal cost is   $c(x^0) = 560$.   The dual MIP sets $z^0_{s,2} = 35$ and $z^0_{s,3} = 20$ as the two reduced capacities. The cut separating $s$ from the rest of the network 
is a min effective capacity cut, with effective capacity $z^0_{s,2} + z^0_{s,3} + x_{s,4} = 55 + x^0_{s,4} = 55$. Hence $\Phi(x^0) = 100 - 55 =45$.     Hence we add, to the master problem, the cut
$\Phi_L \ + \ 55 \ +\ x_{s,4} \ \ge \ 100.$ 

Also, using $\lambda^{\text{lo}} = 0.7$, $\lambda^{\text{hi}} = 0.75$ and $\xi = 0.01$
\eqref{eq:formaltheta} sets $\Theta = 2.49$. \\

\noindent {\bf Iteration 1 yielding $x^1$. }
\[
\begin{array}{r|cccccccccccc}
\text{arc} & (s,2) & (s,3) & (s,4) & (2,3) & (3,4) & (2,5)& (3,6)& (4,7)& (5,6) & (6,7) & (5,t) & (7,t)\\
\hline
\text{flow} & 70 &10  & 20 & 20  & 0 & 50 & 30 & 20 & 0 & 30 & 50 & 50
\end{array}
\] 
This solution has cost $570$ and it estimates $\Phi_L = 25$. The
dual MIP sets $z^1_{s,2} = 35$ and $z^1_{2,5} = 25$, and the minimum effective-capacity cut 
separates node $s$ from the rest of the network. Its effective capacity equals $z^1_{s,2} + x^1_{s,3} + x^1_{s,4} = 35 + 30 = 65$ and therefore $\Phi(x^1) = 35$.  We add to the master the inequality $\Phi_L + x_{s,3} + x_{s,4} \ge 65.$ \\

\noindent {\bf Iteration 2 yielding $x^2$. }
\[
\begin{array}{r|cccccccccccc}
\text{arc} & (s,2) & (s,3) & (s,4) & (2,3) & (3,4) & (2,5)& (3,6)& (4,7)& (5,6) & (6,7) & (5,t) & (7,t)\\
\hline
\text{flow} & 60 &20  & 20 & 10  & 0 & 50 & 30 & 20 & 0 & 30 & 50 & 50
\end{array}
\] 
The solution to the master problem has cost $570$ and $\Phi_L = 25$.  The min-effective capacity cut now
separates nodes $s, 2, 3, \, \text{and} \, 4$ from the rest, with
$z^2_{2,5} = 25$ and $z^2_{3,6} = 20$, and effective capacity $45 + x^2_{4,7} = 20 = 65$. Hence $\Phi(x^2) = 35$.  Note that the reduced capacity of the adversarial cut is $25 + 20 + x_{4,7} = 45 + x_{4,7}$; however, $x^2_{3,6} = 30$, so $x^2$ only loses $10$ units of flow when
the capacity of $(3,6)$ is reduced to $20$ units (as in the current dual MIP solution).  The inequality we add to the master is $\Phi_L + x_{4,7} \ge 45$.\\

\noindent {\bf Iteration 3 yielding $x^3$. }
\[
\begin{array}{r|cccccccccccc}
\text{arc} & (s,2) & (s,3) & (s,4) & (2,3) & (3,4) & (2,5)& (3,6)& (4,7)& (5,6) & (6,7) & (5,t) & (7,t)\\ 
\hline
\text{flow} & 70 &20  & 10 & 20  & 10 & 50 & 30 & 20 & 0 & 30 & 50 & 50
\end{array}
\] 
The solution to the master has cost $c(x^3) = 570$ and $\Phi_L = 35$.  The 
dual MIP now sets $z^3_{s,2} = 35$ and $z^3_{2,5} = 25$, but only achieves $\Phi(x^3) = 35$ units of lost flow (again using the cut
that separates $s$ from the rest of the network).  Since $\Phi(x^3) = \Phi_L$
the algorithm terminates.\\

\noindent In summary, $$\Phi(x^3)/\Phi(x^0) = 35/45 \approx 0.78 \ \text{and} \  c(x^3)/c(x^0) = 570/560 \approx 1.0179.$$  At the same time, we note
that $c(x^3) + \Theta\, \Phi(x^3) = 570 + 2.49*35 = 657.15 > 644.0375 = c(x^0) + \Theta \, \lambda^{\text{hi}} \, \Phi(x^0)$.  Consequently, by Lemma \ref{le:formaltheta} (ii),
we conclude that 
$$ \not \exists \ \text{feasible flow $x$ with} \ \Phi(x) \le 0.7 \,\Phi(x^0) \ \text{and} \ c(x) \le 1.01 \, c(x^0).$$

\section{Numerical experiments}\label{sec:numericalexper}
Here we describe testing of our algorithms on large problem instances arising in two settings: operation of power grids, and logistics. 

\subsection{Logistics under concentration risk}
In this section we apply our methodology to an evolved version of the logistical setup described in \cite{Kilb26}, which arose from an engagement with a stakeholder, and where a heuristic version of Algorithm \ref{alg:SOFTMAX-adv} was applied.  In brief, the problem being considered has the following attributes.
\begin{itemize}
    \item Multiple commodities are shipped from sources to destinations in a network, using multiple vehicle types.  The different commodities have numerical priorities and delivery time windows with lateness penalties. Vehicles have restricted availability (when and where).
    \item In the numerical instance considered here, there are 44 time periods; the network has 17 nodes and 224 links; there are 94 commodity types and 6 vehicle types.
    \item The resulting MIP formulation has 417,330 variables of which 5,920 are integral and there are 118,343 constraints. Gurobi v12 \cite{gurobi} was used to obtain a solution within $5\%$ of optimality while requiring a few minutes of computation.
    \item It was observed, across multiple instances of the problem, that the computed MIP solution exhibits a high degree of concentration; see Figures \ref{fig:log1} and \ref{fig:log2}.  This feature was deemed (by the stakeholder) to be risk-inducing. In a 
    humanitarian logistics setting, or any setting addressing the delivery of critical items, concentration of resources at a geographical location during a short time span is viewed as risky because the loss of a significant 
    quantity of valuable assets due to unforeseen exogenous events (no matter their nature) is of high impact.
\end{itemize}
We have applied the methodology in this paper as follows.  First, for a solution vector $x$ in the MIP, and for an arc $(i,j)$ and time period $\tau$, we write $\phi_{i,j,\tau} = \phi_{i,j,\tau}(x)$ total weight of all commodities shipped on $(i,j)$ starting at time $\tau$ under solution $x$.  Here, a commodity's "weight" is defined as the product of the numerical priority times the actual shipped weight.  We have also considered, using the same methodology, the case where $\phi_{i,j,\tau}$ only accounts for shipped weight.

For these experiments we used the following version of 
Algorithm \ref{alg:SOFTMAX-adv}:
\begin{enumerate}
\item We set $\Phi(x) = \max_{i,j,\tau} \phi_{i,j,\tau}(x)$, i.e., max shipped  weight, on any link and any point in time.
\item We used flattening, synthetic boosting, and log-barrier terms -- see Section \ref{subsec:modifications} and equation \eqref{eq:synthboost}. Flattening is needed because
the $\phi_{i,j,\tau}$ values can be large, as can be the
number of triples $(i,j,\tau)$, and we used a synthetic set  because there is no overt uncertainty in the model. We relied on
$\cZ^{\text{synth}} = \{ \zeta_{i,j,\tau} \ge 0 \, : \,  \sum_{i,j,\tau} \zeta_{i,j,\tau} \le 1 \}.$ 
Furthermore, we used $\alpha = 1$. Thus, at iteration $t$ of Algorithm \ref{alg:SOFTMAX-adv}, the boosting step (approximately) solves:
\begin{align}\label{eq:logboost}
& \max_{\zeta \in \cZ^{\text{synth}}} \ \left[ \sum_{i,j,t}e^{\zeta_{i,j,\tau}} \phi_{i,j,\tau}(x^t) \quad +\quad \epsilon\ln \left(   1 - \sum_{i,j,\tau} \zeta_{i,j,\tau}  \right) \ + \ \sum_{i,j,\tau} \epsilon_{i,j,\tau} \ln(1 - \zeta_{i,j,\tau}) \right],
\end{align}
where $\epsilon > 0$ and $\epsilon_{i,j,\tau} > 0$ for every
triple $(i,j,\tau)$.
\item This optimization task was addressed using a first-order algorithm detailed below in Section \ref{subsubsec:FOA}.  The algorithm was implemented in multistart fashion, using parallel processing.
\item Each start yields a different vector $\zeta \in \cZ^{\text{synth}}$.  The resulting SOFTMAX cut was added to the master problem if violated.
\item We used $\Theta = c(x^*)/\Phi(x^*)$.  This is consistent with \eqref{eq:formaltheta}
when $\xi = \lambda^{\text{hi}} - \lambda^{\text{lo}}$.
\item Algorithm \ref{alg:SOFTMAX-adv} was run using the LP-relaxation of the MIP, solved using the Barrier method (without crossover) in Gurobi \cite{gurobi}. Upon termination, the MIP, with cuts added, was then run. 
\end{enumerate}

\[
\begin{array}{r|cccc}
\text{Iteration}& \text{Risk} & \text{Cost} & \text{Cuts} & \text{runtime (s)}\\
\hline
0 & 66724 & 1095 & 132 & 25.62  \\ 
1 & 53992 & 1179 & 196   & 28.84  \\ 
2 & 39969 & 1153 & 216   & 30.59  \\ 
3 & 50437 & 1153 & 229   & 30.61 \\ 
4 & 50385 & 1153 & 234   & 30.77 \\ 
5 & 44181 & 1156 &  234  & 29.13 \\
\hline
\text{{\bf MIP}} & 44181 & 1186 &     & 91.65 \\  
\end{array}
\] 
We highlight that the MIP attains a $33.33\%$ decrease in risk metric while incurring an $8.3 \%$ increase in cost.  In fact, in hindsight, we could have terminated the algorithm in iteration 2
where the performance is clearly better both in terms of risk and cost. 
\begin{table}[htbp] 
 \caption{Comparison: cumulative weight shipped in top activities, in units of $10^5$}
 \vskip .1in
 \label{table:bad}
    \setlength{\tabcolsep}{6pt}
    \begin{tabular}{l | rrrrrrrrrr} 
    \text{\# of activities}, K & 1 & 2 & 4 & 8 & 10 & 16 & 32 & 64 & 100 \\
    \hline 
    nominal & $3.45$ & $6.79$ & 12.19 & 20.51 & 23.99 &33.24 & 45.08 & 52.72 & 53.72\\
    risk-aware & $1.45$ & $2.91$ & 5.82 &  11.63 & 14.54 & 23.26 & 38.37 & 50.80 & 54.23 
\end{tabular} 
\end{table}

Table \ref{table:bad} complements Figure \ref{fig:log2}.  It displays the weight shipped in the top $K$ activities for both the nominal and risk-aware solutions, for selected values of $K$ up to $100$, which accounts for nearly all the shipments. We remind the reader that some $150$ activities have positive weight in both solutions (out of, potentially, over $10,000$).  We highlight the high concentration evident in the nominal solution: for example, the top ten activities account for nearly $50\%$ of all
shipped weight.  In contrast, the risk-aware solution reduces the concentration metric
by roughly $50$ for $K < 10$, 
and gradually catches up with the nominal solution -- while incurring a relatively small increase
in cost. 

\subsubsection{First-order method for the boosting step}\label{subsubsec:FOA}

The boosting problem \eqref{eq:logboost} is solved via a custom multi-start first-order ascent algorithm (henceforth termed the \textit{Red} solver) based on AdaDelta \cite{zeiler2012adadelta}, so as to handle the optimization
problem \eqref{eq:logboost}. At the time of this writing, the existing optimization algorithms in Knitro \cite{knitro}, a local nonlinear solver, exhibit difficulty converging on our instances due to the highly nonlinear interaction between the exponential and log-barrier terms; our custom approach exhibits superior performance. A custom multi-start, first-order algorithm based on AdaDelta proves superior to other popular first-order algorithms for its ability to self-calibrate the sensitive choice of step size, and because it does not use momentum which often pushes iterates past the log-barrier. Multi-starting generates multiple runs per boosting step. We provide further details next.

The feature values, $\phi_{i,j,\tau}(x^t)$, of the current solution are normalized and scaled by a predetermined constant to prevent either the boosting term or log-barrier terms from dominating the objective. We mutli-start the first-order solver with a number of initial points, $n_\text{starts}$. For each of these starts $1 \le h \le n_\text{starts}$ we select an initial point $\zeta^{(h)}$ using a sliding window, $\mathcal{W}_h$, of given size and step, on the sorted indices of the feature values $\phi_{i,j,\tau}(x^t)$ via,
\begin{align*}
    \zeta^{(h)}_{i,j,\tau} = \begin{cases} 
      w & (i,j,\tau) \in \mathcal{W}_h \\
      0 & (i,j,\tau) \notin \mathcal{W}_h
\end{cases},
\end{align*}
where $w > 0$ is some small constant.

The motivation for using a sliding window is twofold. Firstly, in high dimensions, the sum $\sum_{i,j,\tau} \zeta_{i,j,\tau}^{(h)}$ easily surpasses the the budget of $1$, i.e., violates the aggregate log-barrier, even for near-zero efforts across each feature. So we only initialize a subset of variables as nonzero; namely, those corresponding to the largest feature values. Secondly, the largest value $\zeta_{i,j,\tau}^{(h)}$ in a solution, likely corresponds to the largest feature value $\phi_{i,j,\tau}(x^t)$, which, if only the cut associated with $\zeta^{(h)}$ is added, can cause the next solution, $x^{t+1}$, to shift weight away from the tuple $(i,j,\tau)$. The sliding window masks larger features from previous starts in order to elucidate the next greatest exposure to concentration in a look-ahead-type fashion.

Optionally, indices of $\zeta^{(h)}$ corresponding to the largest feature values within the window are set to a larger constant $w'>w$. This mitigates the solver converging to local optima where equal weight is put on multiple features, when an asymmetric solution focusing on the features with higher concentration is superior. Moreover, during each start $h$, the gradients of coordinates $\zeta^{(h)}_{i,j,\tau}$ outside of the window are filtered out of the first-order update computation, thus fixing these coordinates to $0$ during the course of the solve.

Worker processes are used to parallelize the solve of each initial point via a custom implementation of AdaDelta using projection and backtracking. Projection is used to prevent variables from becoming negative, and is implemented by clipping, i.e., taking the positive part of the iterate $[\zeta^{(h)}]^+$, after each gradient step. Conversely, backtracking is employed if a variable violates the log-barrier, and is implemented so as to satisfy Wolfe conditions \cite{nocedal2006numerical}.

The disparity in which corrective measure to use for maintaining feasibility is based on our belief that $\zeta$ variables approaching the log-barriers indicate features worth disrupting and so merit a delicate analysis, whereas those approaching $0$ are unappealing to the solver and can be discarded.

We now specify the parameters and settings for the Red solver used in our experiments.
The log-barrier constants are $\epsilon = \epsilon_{i,j,\tau} = 1.0$, for the simple reason that the log-barriers tended to dominate the objective for larger values. With this choice of parameters, normalizing and scaling the features of the current solution, $\phi_{i,j,\tau}(x^t)$, by $10.0$ provides a balanced interaction between the boosting and log-barrier terms in the objective.

For multistarting we used $h^\text{starts}=30$, a window of size $10$ and step $1$, and value $w=1/(10\dim W)$. A window size of $10$ enhances the first-order solver focus on the highest concentration for each start. The constant $\frac{1}{10\dim W}$ induces the initial sum of $\zeta$ variables, $\frac{1}{\dim W}$, to be much less than the aggregate budget $1$. Moreover, we warmstarted $1$ coordinate (the largest feature in the window) to the value $w'=0.5$. Finally, the internal parameters for AdaDelta and backtracking are the default values in their respective literature  \cite{zeiler2012adadelta, nocedal2006numerical}.

% \begin{table}[H]
% \centering
% \caption{Experimental settings for initial point selection for the Red Solver}
% \label{tab:red_implementation}
% \begin{tabular}{l c}
% \hline
% Setting & Value \\
% \hline
% $h^\text{starts}$ & $30$ \\
% window size & $10$ \\
% window step & $1$ \\
% warm start size & 1\\
% $w$ & $\frac{1}{10 \dim W}$ \\
% $w'$ & $0.5$ \\
% \hline
% \end{tabular}
% \end{table} 

\subsection{Constraint coefficient uncertainty}
Now consider problems of the form (\ref{eq:nonlineargeneric}) where the constraint coefficients, $a_{ij}$, are subject to uncertainty. Such uncertainties can arise in problems whose formulations rely on physical measurements, such as in power flow \cite{cain+etal12}, pooling \cite{pooling}, and beamforming \cite{beamforming}.  See \cite{blake2} for a more extensive analysis.

For experimentation, we consider instance QPLIB 2894 of the Quadratic Programming Library \cite{qplib}, which is a linear quadratically constrained program with 17 variables and 209 constraints. Many constraints, both linear and quadratic, feature fractional coefficients that could contain errors. For example, see constraint e198 (with coefficients rounded to nearest thousandth) below.
\begin{align*}
&22.009\, x_6^2 + 9.132\, x_6x_8 + 18.458\, x_6x_9 - 11.482\, x_6x_{10} - 9.131\, x_6x_{11}- 2.156\, x_6x_{16} + 2.156\, x_6x_{17} \\ 
&+ 22.009\, x_7^2 + 2.156\, x_7x_8 + 11.482\, x_7x_9+ 18.458\, x_7x_{10} - 2.156\, x_7x_{11} + 9.132\, x_7x_{16} - 9.132\, x_7x_{17}+\, x_8^2 \\ 
&+ 4.391\, x_8x_9 - 1.478\, x_8x_{10} - 2\, x_8x_{11} + 5.367\, x_9^2 - 4.391\, x_9x_{11} + 1.478\, x_9x_{16} - 1.478\, x_9x_{17}+ 5.367\, x_{10}^2\\ 
& + 1.478\, x_{10}x_{11} + 4.391\, x_{10}x_{16} - 4.391\, x_{10}x_{17} +\, x_{11}^2 +\, x_{16}^2 - 2\, x_{16}x_{17}+\, x_{17}^2 \ \ge \ 1.300
\end{align*}
At the optimal solution $x^*$, $x^*_7 = 1.000$ and $x^*_{17}=8.450$, so the term $- 9.132\, x^*_7x^*_{17}=-77.165$. Moreover, this constraint has a slack of 0.024. Should the coefficient $-9.132$ of the monomial $x_7x_{17}$ decrease by $1\%$ to $-9.223$, the value of the monomial would become $-77.934$, and $x^*$ would violate the constraint by $0.745$ or $57.5\%$ of the right-hand side. In fact, any error in this coefficient beyond $0.04\%$ in the negative direction results in a violation.

To measure the adversarial infeasibility incurred by x feasible for (\ref{eq:nonlineargeneric}), we use
\begin{align*}
\phi_i(x|z) \ = \ \frac{1}{s_i}\left[ \sum_{j \in J_i} a_{ij}f_{ij}(x) (1+z_{ij}) - b_i \right]^+ \ \doteq \ \frac{1}{s_i}\left[ \sum_{j \in J_i} a_{ij}f_{ij}(x) z_{ij} - \text{slack}_i(x) \right]^+,
\end{align*}
where $z_{ij}\geq0$ is the error on coefficient $a_{ij}$, $s_i \geq 0$ is some scale factor for constraint $i$, and $\text{slack}_i(x) = b_i-\sum_{j\in J_i}a_{ij}f_{ij}(x)$ is the slack of constraint $i$ for solution $x$. We examine the two scaling choices: $s_i = \|a_i\|_\infty$ for ``scaled absolute violation" and $s_i = \max(b_i, 1)$ for ``percent violation". For the adversarial budget we used the Euclidean ball $\cZ = \{ z \in \R^{d} \, : \, \|z\|_2 \le .01\}$, where $d= |I|\sum_{i\in I}|J_i|$, hence limiting individual coefficient error to within $1\%$ in the worst case. Finally, we used the following implementation of Algorithm \ref{alg:SOFTMAX-adv}:
\begin{itemize}
    \item We compute parameter $\Theta$ as per \eqref{eq:formaltheta}, with $\xi = 0.01$, $\lambda^{\text{lo}} = 0.48$ and $\lambda^{\text{hi}} =0.50$.
    \item The boosting step used the greedy method, i.e., at iteration $t$ we set
    $$z^t \ = \ \argmax_{z \in \cZ\,:\,\|z\|_0 = 1} \max_{i \in I} \  \frac{1}{s_i}\left[ \sum_{j \in J_i} \, a_{ij}f_{ij}(x) z_{ij} - \text{slack}_i(x) \right]^+,$$% \ \doteq \ .01e_{\hat i\hat j}\, ,$$
    which is the unit vector $.01e_{i^tj^t}$ for the coordinate $(i^t,j^t)\in I\times J_i$ that maximizes $\frac{1}{s_i}\left[ .01(a_{ij}f_{ij}(x)) - \text{slack}_i(x) \right]^+$.
    \item For the cutting step we add the cut $\Phi_L \geq \phi_{i^t}(x|z^t) = \left[ .01(a_{i^tj^t}f_{i^tj^t}(x)) - \text{slack}_{i^t}(x) \right]^+ / s_{i^t}$.
    \item The algorithm is run until it terminates as per criterion $b$ on line $6$.
    \item The master problem is solved using Gurobi \cite{gurobi}.
\end{itemize}

We present results for both measures of adversarial infeasibility in tables \ref{table:scaledabsviolation} and \ref{table:percentviolation}. Revisiting the monomial $x_7x_{17}$ in constraint e198, the potential $57.5\%$ violation is the initial worst-case for the percent violation measure of infeasibility. Upon termination of the algorithm, the output, $\hat x$, has $\hat x_7=0.998$ and $\hat x_{17}=8.472$, so the term $-9.132\hat x_7 \hat x_{17} = -77.212$. However, this constraint now has a slack of 0.717, so a $1\%$ decrease in the coefficient $-9.132$ only results in violation of $4.03\%$ of the right-hand side. Moreover, any error in this coefficient smaller than $0.92\%$ will not result in a violation of constraint e198 for $\hat x$.

\begin{table}[htbp] 
 \caption{Results for scaled absolute violation}
 \vskip .1in
 \label{table:scaledabsviolation}
    \setlength{\tabcolsep}{6pt}
    \begin{tabular}{c | cccc} 
    \text{iterations} &  \text{runtime (s)} &\text{worst-case scaled abs. violation} & \text{corresp. percent violation, $\%$} & \text{cost $\Delta$, $\%$}\\ 
    \hline 
    0 & 2.76  & 0.069 & 4.80 & -\\
    2 & 11.96 & 0.034 & 12.20 & 1.00\\
    7 & 32.65 & 0.014 & 4.20 & 1.06
\end{tabular} 
\end{table}

\begin{table}[htbp] 
 \caption{Results for percent violation}
 \vskip .1in
 \label{table:percentviolation}
    \setlength{\tabcolsep}{6pt}
    \begin{tabular}{c | cccc} 
    \text{iterations} &  \text{runtime (s)} &\text{worst-case percent violation ,$\%$} & \text{corresp. scaled abs. violation} & \text{cost $\Delta$, $\%$}\\ 
    \hline 
    0 & 2.78  & 57.53 & 0.034 & -\\
    1 & 7.38 & 17.85 & 0.045 & 1.07\\
    6 & 19.67 & 4.23 & 0.002 & 1.08
\end{tabular} 
\end{table}

\subsection{ACOPF under large branch loading and thermal risk} 

The ACOPF (Alternating Current Optimal Power Flow) problem arises in the operation of power grids, and has received considerable attention in part due to the increasing importance of efficient energy delivery, and also because of a perceived increase in failures of grids. To put the risk issue into perspective, a standard operating requirement for power grids is that they be able to tolerate the failure of any one component (for any reason whatsoever) and this requirement is incorporated into optimization algorithms in daily use; the so-called N-1 rule.  

A more evolved risk attitude
focuses on a variety of  flavors, including thermal management issues, and we address that perspective in this section.  A salient fact is that imposing a 
strict "N-K" requirement for $K > 1$ but small (e.g., $K = 3$) would be onerous, and an alternative is to reduce rather than eliminate risk, if it can be done at small cost.

For background on ACOPF and related issues, see \cite{carpentier62}, \cite{cain+etal12}, \cite{bienstock15}, \cite{bienstock+etal20}, \cite{molzahn+hiskens19}, \cite{coffrin+vanhentenryck14}.  For the purpose of explaining the experiments described here, we provide a brief outline of the problem and its risk implications, using a mix of power systems and generic terminology.

\begin{itemize}
    \item The problem seeks to generate power at minimum cost, subject to satisfying a set of demands, and physical constraints.  
    \item As input to the problem there is a network (in the graph theoretic sense) whose \textit{branches} (i.e., arcs) model transmission lines and are endowed with numerical parameters that describe the power flow physics. 
    \item Power is generated at the nodes of the network, where generators are housed; each generator is described by a (convex) cost function and a maximum output.  Demands are also situated at
    the nodes of the network.
    \item Power flows on branches are described by nonlinear and nonconvex equations involving physical quantities (voltages) which are also variables and are nodal attributes.
    \item For each branch $(k,m)$ the power flowing from $k$ to $m$ is 
    indicated by a variable $P_{km}$\footnote{A technical detail is that, in general, $P_{km} \neq -P_{mk}$ though the difference is small.}.
    \item Each branch $(k,m)$ has a \textit{resistance} $r_{km} > 0$, and the thermal energy radiated by the line is, approximately, $r_{km} \, P^2_{km}$ under standard assumptions on voltages (a more accurate representation is provided by the \textit{current} on branch
    $(k,m)$).
    \item There is a standard approximation (not a relaxation) to ACOPF that is linear: the so-called DC approximation, or DCOPF in short, which is in fact the standard formulation used in economic grid operation (even if ACOPF is more accurate and used primarily for generic risk assessments).  DCOPF does include quantities playing the role of the $P_{km}$ above, and has identical cost function as
    ACOPF.
\end{itemize}
In the experiments that we describe in this section, cost (i.e., the $c(x)$ in our formulations) will be the generation cost.   We will use
$x$  to generically refer to the set of all variables in ACOPF.  To describe the impact function $\Phi(x)$ we rely on setups using the $r_{km} \, P^2_{km}$ thermal quantities described above. In what follows we write
$$T_{km} = T_{km}(x) \doteq r_{km} \, P^2_{km}$$
for brevity. Also let $\myB$ denote the set of branches of the network.
Here we describe four different experiments, using a combination of $\Phi(x)$ functions, without and with a set $\cZ$, and using different boosting and separation approaches.  Some common details are as follows:
\begin{enumerate}
\item In all cases we compute the quantity $\Theta$ as per \eqref{eq:formaltheta}: $ \Theta \ = \ \frac{c(x^*) \,  \xi}{\Phi(x^*) \,(\lambda^{\text{hi}} - \lambda^{\text{lo}})}$, with $\xi = 0.01$, $\lambda^{\text{lo}} = 0.4$ and $\lambda^{\text{hi}} =0.5$.
\item To set $\alpha$ in Algorithm \ref{alg:SOFTMAX-adv} we used the following heuristic (which is aligned with the theory presented above).
Let $T^* = \max_{km}T_{km}$ in the nominal case.  Then $\alpha = \min\{50, \ln|\myB|/(0.25 \Phi(x^*))\}.$  The denominator in this last expression was motivated by empirical observations using relevant examples.
\item The algorithm is run until it terminates, or it achieves $\Phi_L \le 0.5 \, \Phi(x^0)$.
\item The algorithm is run using the DCOPF formulation which is addressed using Gurobi.  Upon termination, the cuts are transferred to the ACOPF formulation and we run Knitro \cite{knitro} using the amplpy \cite{amplpy} interface to handle that problem.  We also run Knitro on the nominal ACOPF formulation in order to obtain the nominal cost and risk values.
\item With regards to the item above, an important perspective here is that while the AC formulation is clearly better aligned with power flow physics, the energy markets actually rely on the DC formulation.  In light of that fact the DC-based approach is in a precise sense more meaningful, but we include the AC-based analysis for completeness.
\end{enumerate}
In the experiments described below, we used the following well-known cases present in the Matpower \cite{Matpower} or pglib \cite{pglib_opf} libraries:
\[
\begin{array}{r|lccccccccccc}
\text{\#} & \text{name} & \text{nodes} & \text{branches} & \text{generators}  &   \\ 
\hline
1 &\text{pglib$\_$opf$\_$case13659$\_$pegase$\_\_$api.m} & 13659 & 20467& 4092  \\
2 &\text{pglib$\_$opf$\_$case30000$\_$goc$\_\_$api.m} & 30000 & 35393 & 
3526 \end{array}
\] 
These cases have the following attributes:

\[
\begin{array}{r|cccccccccccc}
\text{\#} &  \text{variables, AC} &  \text{constraints, AC} &\text{variables, DC} & \text{constraints, DC}\\ 
\hline
1 &   137837 & 170587  & 51878 &  47785\\
2 & 222878 & 307751 & 98920 & 95393
\end{array}
\] 

\subsubsection{Introductory case with $\cZ = \emptyset$ and maximum thermal metric} \label{subsubsec:maxthermal}
 First we
use
$$ \Phi(x) \ = \ \max_{(k,m) \in \myB} T_{km}.$$
Thus, there is a feature per branch, and since $\cZ = \emptyset$ the boosting step (line 5 of Algorithm \ref{alg:SOFTMAX-adv}) is void. In terms of the cut used in the algorithms, we define $$ \pi_{km} \ = \ \frac{e^{\alpha T_{km}}}{\sum_{(k',m') \in \myB} e^{\alpha T_{k'm'}}},$$ that is to say, $\pi = \text{SOFTMAX}(1,T)$.  The coefficients $\pi_{km}$ are rounded to zero if they fall beneath $10^{-6}$.   We obtained the following results; in all tables below  "cost $\Delta$" stands for cost increment.
\[
\begin{array}{r|cccccccccccc}
\text{\#} &  \text{iterations} &  \text{runtime (s)} &\text{risk reduction, $\%$} & \text{cost $\Delta$, $\%$} & \text{AC risk reduction, $\%$} & \text{AC cost $\Delta$, $\%$}\\ 
\hline
1 &   4 & 5.63  & 50.18 &  0.56 & 31.91 & 0.44 \\
2 & 3 & 6.6 & 47.86 & 0.80 & 28.64 & 0.48 %run2
\end{array}
\] 

Note the relative degradation of the improvement in the risk metric when we go from the DC to the AC formulation.  

\subsubsection{Max thermal metric under distributionally robust exogenous risk} \label{subsub:maxthermalrob}
Next we consider a variant of the above max thermal feature with a significant difference.
Namely, we assume
that the thermal metric $T_{km} = r_{km} P^2_{km}$ is now augmented with a term that 
reflects an exogenous source of risk.  Briefly, elevated transmission line temperatures 
pose a number of risks that are difficult to quantify.  Additionally, line
temperatures are due to a combination of endogenous factors (our $T_{km}$ term, in approximation) and 
exogenous and uncertain factors that are extremely difficult to precisely specify and measure.  See the 
IEEE 738 standard \cite{IEEE738-2012}.  

Both factors, endogenous and exogenous, are nominally combined 
using the heat equation, whose (deterministic) solution would yield line temperature as a function of time -- once more, an effectively impossible task due to data unavailability to the operator.  As a final point, it is worth stressing that under such a model, actual line temperature on an branch $(k,m)$ is a highly nonlinear function of $T_{km}$ and the exogenous factors.

We take a stance where we measure actual risk by adding, to the $T_{km}$ term, a second term that reflects a given multiple of a standard deviation of additional thermal risk exposure.  We assume that to first order, that additional term is \textit{proportional} to $T_{km}$.
 
 More precisely, for a  branch $(k,m)$,  we write $\phi_{km}(x|z) \, = \, (1 + z_{km}) T_{km}$, where $z_{k,m} \ge 0$ is the multiplicative exogenous impact on risk.  In this section we focus on the case where
 $$ \Phi(x) \ = \ \max_{z \in \cZ} \max_{(k,m) \in \myB} \phi_{km}(x|z),$$
and we will use a budgets set
$$ \cZ \ = \ \left\{ 0 \le z_{k,m} \le 1 \, : \, \sum_{(k,m) \in \myB} z_{k,m} \le N \right\},$$
for some $N > 0$. In terms of the usage of Algorithm \ref{alg:SOFTMAX-adv}, we applied the following rules:
\begin{itemize}
\item The solution to the boosting problem in line 5 is obtained by setting $z_{km} = 1$ for the
$N$ branches $(k,m)$ with top value $T_{k,m}$, and $z_{k,m} = 0$ otherwise.  
\item We used $N = 5$, and we applied \textit{clipping} (Section \ref{subsec:modifications})  
when computing the cut used in the separation step.  This last feature is motivated by the fact that
at any given iteration $t$, the branches with $z^t_{k,m} = 1$ are also those with largest value $T_{k,m}$, and, as a result, the sum of the top  $2N$ values $e^{\alpha (1 + z_{km}) T_{km}}$ 
strongly dominates the sum of all such terms.  In other words, the top $2N$ values $\pi^t_{km}$ 
already add up to approximately $1$ (and the rest contribute a negligible amount).

\end{itemize}

\[
\begin{array}{r|cccccccccccc}
\text{\#} &  \text{iterations} &  \text{runtime (s)} &\text{risk reduction, $\%$} & \text{cost $\Delta$, $\%$} & \text{AC risk reduction, $\%$} & \text{AC cost $\Delta$, $\%$}\\ 
\hline
1 &  10 & 18.78 & 50.76 & 1.05 & 36.41 & 0.56 \\
2 & 5 & 64.66 & 50.33 & 0.56 & 28.43 & 0.48 %run13
\end{array}
\] 
\subsubsection{Average of top ten}\label{subsubsec:avtop10}
Let $T_{(i)} = T_{(i)}(x)$ be the $i^{th}$ largest value $T_{km}$. We remind the reader that the $T_{km} = r_{km} P^2_{km}$ and that $x$ is the vector of all variables, including the $P_{km}$.  In the next 
set of experiments we use
$$ \Phi(x) \ = \ \frac{1}{10} \sum_{i = 1}^{10} T_{(i)}.$$
This is again a case with $\cZ = \emptyset$. If we use
$\myB^{10}$ to denote the set of all 10-tuples chosen from $\myB$, we can equivalently write
\begin{align} \label{eq:phi5}
& \Phi(x) \ = \ \max_{s \in \myB^{10}} \phi_s(x), \quad \text{where for $s \in \myB^{10}$ we write} \ \phi_s(x)=\frac{1}{10}\sum_{(k,m) \in s} T_{km}.
\end{align}
In other words, there is a feature for each 10-tuple of branches.   For the separation step 
we used the following approach:
\begin{itemize}
\item At any iteration $t$ we generate a set $S^t \subset \myB^{10}$.  
\item We construct $S^t$ by selecting 10-tuples chosen from among the top 20 branches $(k,m)$ as per the metric $T_{km}$.
The set $S^t$ is guaranteed to include the tuple $s$ attaining maximum
value $\phi_s(x^t)$ as in \eqref{eq:phi5}.
\item For each tuple $s \in S^t$ we add the cut $\Phi_L \ge \ \frac{1}{10} \sum_{(k,m) \in s} T_{km}$.  Thus, with $|S^t| = 10$, ten cuts are added in each iteration.
\end{itemize}

\[
\begin{array}{r|cccccccccccc}
\text{\#} &  \text{iterations} &  \text{runtime (s)} &\text{risk reduction, $\%$} & \text{cost $\Delta$, $\%$} & \text{AC risk reduction, $\%$} & \text{AC cost $\Delta$, $\%$}\\ 
\hline
1 &   5 & 14.48  & 51.71 & 1.41 & 33.96 & 1.01 \\
2 & 10 & 47.46 & 40.12 & 0.66 & 20.63 & 0.60 %run30kave10
\end{array}
\]

%\[
%\begin{array}{r|cccccccccccc}
%\text{\#} &  \text{iterations} &  \text{runtime (s)} &\text{risk reduction, $\%$} & \text{cost $\Delta$, $\%$} & \text{AC risk reduction, %$\%$} & \text{AC cost $\Delta$, $\%$}\\ 
%\hline
%1 &   4 & 11.22  & 50.54 & 1.16 & 38.69 & 0.87 \\
%2 & 10 & 39.93 & 43.22 & 0.88 & 29.11 & 0.91 %run5
%\end{array}
%\] 

\subsubsection{Average of top five under distributionally robust exogenous risk} \label{subsubsec:top5z}
Here we continue with the model above, but, as in Section \ref{subsub:maxthermalrob}, we use
a multiplicative model for endogenous risk. More precisely, for a tuple
$s \in \myB^5$ we write
$$ \phi_s(x|z)  \ = \ \frac{1}{5} \left( \sum_{(k,m) \in s}(1 + z_{km}) T_{km} \right),$$
where $z_{km} \ge 0$ is the multiplicative exogenous impact on $(k,m)$.  Continuing as above, we will
now focus on 
$$ \Phi(x) \ = \ \max_{z \in \cZ} \max_{s \in \myB^5} \   \phi_s(x|z) \ = \ \max_{z \in \cZ} \max_{s \in \myB^5} \ \frac{1}{5}\left(\sum_{(k,m) \in s} (1 + z_{km}) T_{k,m}\right).$$
For these 
experiments we used
$$ \cZ \ = \ \{ z \in \R^{\myB} \, : \, \|z\|_2 \le 1\},$$
i.e., the Euclidean unit ball.   In terms of implementation of Algorithm \ref{alg:SOFTMAX-adv} we used the following methodology:
\begin{itemize}
\item The boosting step used the greedy method, that is to say, at iteration $t$ we
set 
$$z^t \ = \ \argmax_{\|z\|_2 = 1} \max_{s \in \myB^5} \  \sum_{(k,m) \in s}   \, z_{km} T_{km}  \ = \ \argmax_{\|z\|_2 = 1}  \sum_{(k,m) \in s^*}   \, z_{km} T_{km},$$
where $s^* \in \myB^5$ is the ordered 5-tuple made up of the top 5 branches under the $T_{km}$ metric.  A computation shows that 
$$z^t_{k,m} = \frac{T_{km}}{\sqrt{\sum_{(k',m') \in s^*} (T^t_{k',m'})^2}} \ \text{for} \ (k,m) \in s^*, \quad z^t_{k,m} = 0 \ \text{otherwise}.$$
\item For the cutting step we used the same approach as in the above section, i.e., we select a set of tuples $s$ (including $s^*$) (a total of ten tuples in the set) and for each tuple we add the 
cut $\Phi_L \ge \frac{1}{5} \sum_{(k,m) \in s} (1 + z^t_{km}) T_{km}$.
\end{itemize}

\[
\begin{array}{r|cccccccccccc}
\text{\#} &  \text{iterations} &  \text{runtime (s)} &\text{risk reduction, $\%$} & \text{cost $\Delta$, $\%$} & \text{AC risk reduction, $\%$} & \text{AC cost $\Delta$, $\%$}\\ 
\hline
1 &  10 & 11.92  & 56.78 & 1.34  & 36.26 & 1.03 \\
2 & 12 & 62.66 & 43.39 & 0.88& 27.20 & 0.81 %run30kave10exo
\end{array}
\] 

\subsubsection{Analysis of outcomes}
Here we compare the solutions computed using the above models, as specific to case \#2 (the 30,000-node system).

In Figure \ref{fig:thermal} we display the top 30 values $T_{km}$ for the solution computed in the nominal case, the max-robust case in Section \ref{subsub:maxthermalrob}, the max-top-10-average in Section \ref{subsubsec:avtop10} and the average-top-5-robust case considered in Section \ref{subsubsec:top5z}.  It is important to note that the 30 values plotted on each of these four
curves refer, in each case, to a set of 30

As expected, all solutions produced by the algorithm substantially
reduce the risk present in the nominal solution.  The max-robust solution is the least attractive among those but even so, it continues to de-risk the nominal solution even after we move past the maximum.

Another feature of interest is that these three solutions have similar $\max_{km} T_{km}$ metric. However  the "max-robust solution" appears inferior to the "average of top 5 (robust)" and the 
"average of top 10" solutions.  The latter, superficially, appears
to be the most appealing, but it should be remembered that all
three of the "robust" solutions have that additional attribute, i.e., they are all protected against (a model of) exogenous risk.

An additional perspective is gained by comparing how the different solutions arrange power 
flows.  The table below displays the top ten branches in terms of the thermal metric $T_{k,m}$ for the nominal solution:
\[
\begin{array}{r|rrrrrrrrrrcc}
\text{rank} &  1 & 2 &3 & 4 & 5 & 6 & 7 & 8 & 9 & 10\\ 
\hline
\text{branch} &  30172 & 3170  & 1408 & 5295  & 1412 & 5296 & 5463 & 2741 &5287 & 17813\\
T_{km} & 0.62 & 0.46 & 0.40 & 0.37& 0.34 & 0.34 & 0.31 & 0.30 & 0.30 & 0.27 \\
P_{km} & 11.59 & 38.70 & -37.23 & -37.89 & -32.12 & -33.79 & -28.44 & 33.64 & 28.43 & 8.63
\end{array}
\] 
We can compare these values to the outcome from the "average of top five, robust" case, where we highlight the branches that do not appear in the table above:
\[
\begin{array}{r|rrrrrrrrrrcc}
\text{rank} &  1 & 2 &3 & 4 & 5 & 6 & 7 & 8 & 9 & 10\\ 
\hline
\text{branch} &  1408 & 17813  & 30172 & {\bf 28795}  & 3170 & 2741 & {\bf 21388} & {\bf 7632} & 1412 & {\bf 17835}\\
T_{km} &  0.33 & 0.26 & 0.22 & 0.22& 0.22 & 0.22 & 0.22 & 0.21 & 0.21 & 0.21 \\
P_{km} &  -33.53 &  8.51 & 6.86 & -8.61 & 27.22 &28.30 & 16.83 & 32.45 & -25.28 & -21.53
\end{array}
\] 
\textit{different} branches $(k,m)$.  

\begin{figure}[H]
        \includegraphics[scale=0.6]{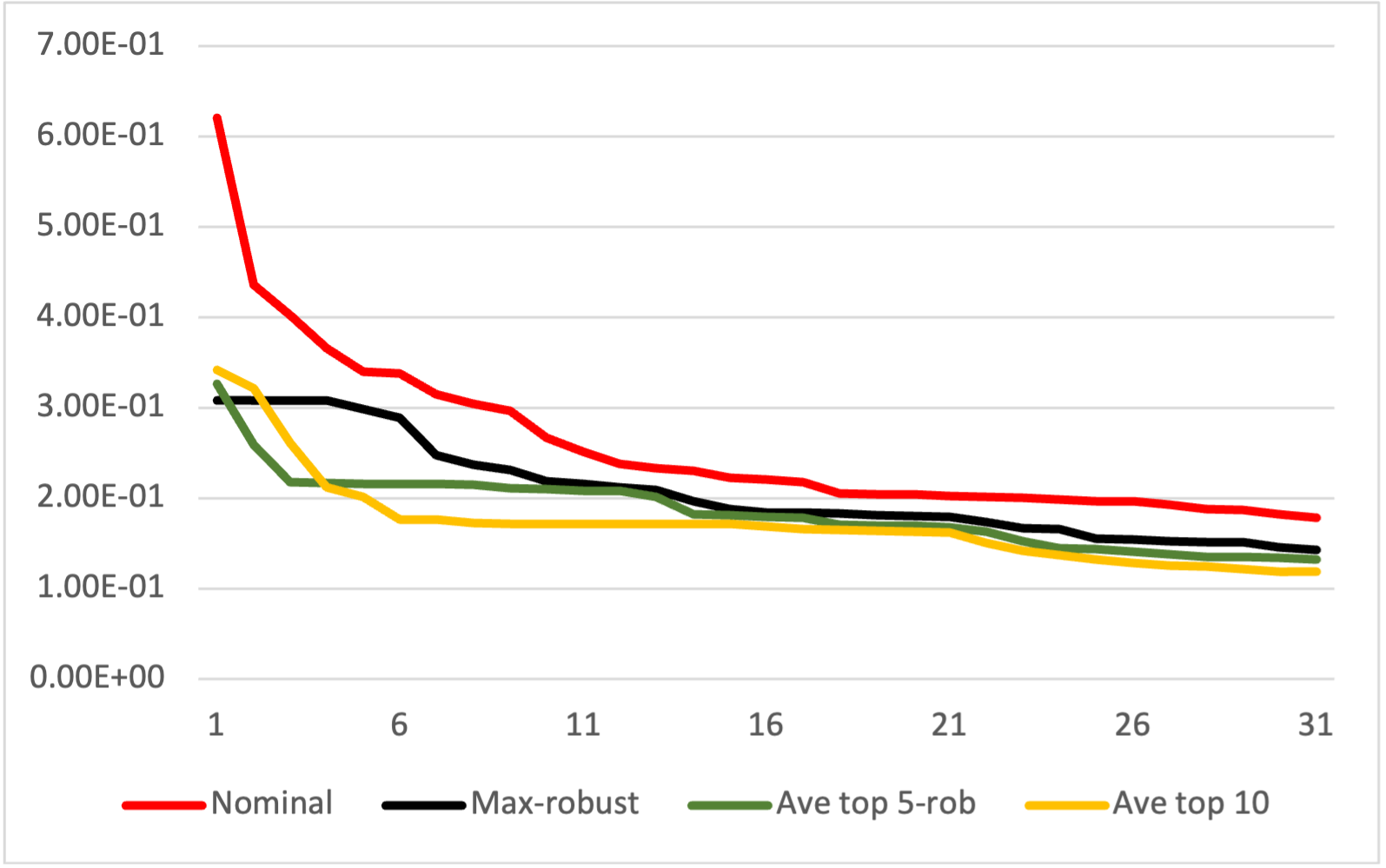}
        \centering
        \caption{Thermal metrics comparison}
        \label{fig:thermal}
\end{figure}

%\begin{wrapfigure}{r}{0.5\textwidth}
%  \begin{center}
%    \includegraphics[width=0.48\textwidth]{thermals.png}
%  \end{center}
%  \caption{Thermal metrics comparison }
%  \label{fig:thermal}
%\end{wrapfigure}

Note that the top six $T_{km}$ entries in the table depicting the nominal solution are 
all higher than the maximum for the robust solution (i.e., $0.33$) even though that solution only considers the
average among the top five.  Moreover, branch 17813 is the tenth-ranked in the nominal solution yet it attains a metric that is higher than all but the highest branch in the robust solution.    

\bibliographystyle{splncs04.bst} 
\bibliography{redblue}

@misc{gurobi,
  author = {{Gurobi Optimization, LLC}},
  title = {{Gurobi Optimizer Reference Manual}},
  year = 2026,
  url = "https://www.gurobi.com"
}

@article{Matpower,
    author = {Zimmerman, R.D. and Murillo-Sanchez, C.E. and Gan, D.},
    title = {{MATPOWER, A MATLAB Power System Simulation Package}},
    journal = {IEEE Trans. Power Sys.},
    year = {2011},
    pages = {12-19},
    volume ={26}
}

@article{markowitz,
    author = {Markowitz, H},
    title = {{Portfolio Selection}},
    journal = {The Journal of Finance},
    year = {1952},
    pages = {77-91},
    volume ={7}
}

@article{pglib_opf,
  title={{The Power Grid Library for Benchmarking AC Optimal Power Flow Algorithms}},
  author={Babaeinejadsarookolaee, S. and Birchfield, A. and Coffrin, C. and others},
  journal={arXiv preprint arXiv:1908.02788},
  year={2019}
}

@article{coffrin+vanhentenryck14,
    author = {Coffrin, C. and Van Hentenryck, P.},
    title = {A Linear-Programming Approximation of AC Power Flows},
    journal = {INFORMS Journal on Computing},
    year = {2014},
    volume = {26},
    pages = {718-734}
}

@incollection{knitro,
    author = {Byrd, R.H. and Nocedal, J. and Waltz, R.A.},
    title = {KNITRO: {An} {Integrated} {Package} for {Nonlinear} {Optimization}\
},
    booktitle = {Large-Scale Nonlinear Optimization},
    publisher = {Springer},
    year = {2006},
    editors = {Di Pillo, G. and Roma, M.},
    volume = {83},
    pages = {35-59}
}

@BOOK{molzahn+hiskens19,
  author={Molzahn, Daniel K. and Hiskens, Ian A.},
  title={A Survey of Relaxations and Approximations of the Power Flow Equations},
  year={2019},
  publisher={Now, Foundations and Trends},
  volume={},
  number={},
  pages={},
  doi={}
}

@book{bienstock15,
	title = {Electrical transmission system cascades and vulnerability, an {Operations} {Research} viewpoint},
	language = {en},
        year = {2015},
	author = {Bienstock, D.},
        publisher = {Society for Industrial and Applied Mathematics}
}

@article{cain+etal12,
	title = {History of {Optimal} {Power} {Flow} and {Formulations}},
        journal = {Federal Energy Regulatory Commission},
	language = {en},
	author = {Cain, M. B. and O’Neill, R. P. and Castillo, A.},
	year = {2012},
}

@article{carpentier62,
    author = {Carpentier, J. L.},
    title = {Contribution a l’Etude du Dispatching Economique},
    journal = {Bulletin de la Societe Francoise des Electriciens},
    year = {1962},
    volume = {8},
    issue = {3},
    pages = {431-447}
}

@article{pooling,
    author = {Misener, Ruth and Floudas, Christodoulos},
    year = {2009},
    month = {01},
    pages = {},
    title = {Advances for the pooling problem: Modeling, global optimization, and computational studies Survey},
    volume = {8},
    journal = {Applied and Computational Mathematics}
}

@ARTICLE{beamforming,
  author={Vorobyov, S.A. and Gershman, A.B. and Zhi-Quan Luo},
  journal={IEEE Transactions on Signal Processing}, 
  title={Robust adaptive beamforming using worst-case performance optimization: a solution to the signal mismatch problem}, 
  year={2003},
  volume={51},
  number={2},
  pages={313-324},
  doi={10.1109/TSP.2002.806865}}

@article{bienstock+etal20,
	title = {Mathematical programming formulations for the alternating current optimal power flow problem},
	volume = {18},
	issn = {1619-4500, 1614-2411},
	url = {https://link.springer.com/10.1007/s10288-020-00455-w},
	doi = {10.1007/s10288-020-00455-w},
	number = {3},
	urldate = {2023-10-29},
	journal = {4OR},
	author = {Bienstock, D. and Escobar, M. and Gentile, C. and Liberti, L.},
	month = sep,
	year = {2020},
	pages = {249--292},
}

@misc{amplpy,
  title        = {{AMPL Python API}},
  author       = {{AMPL Optimization Inc.}},
  url          = {https://ampl.com},
  year         = {2026},
  note         = {Accessed: May 23, 2026}
}

@techreport{IEEE738-2012,
  type      = {IEEE Standard},
  title     = {{IEEE Standard for Calculating the Current-Temperature Relationship of Bare Overhead Conductors}},
  author    = {{IEEE Power and Energy Society}},
  number    = {IEEE Std 738-2012},
  year      = {2012},
  month     = {December},
  publisher = {IEEE},
  address   = {New York, NY, USA},
  doi       = {10.1109/IEEESTD.2012.6401981}
}

@article{Kilb26,
title= {{Probabilistic Modeling versus Robust Optimization: A tutorial based on a humanitarian logistics use case}},
author = {Kilb, J and Newman, A. and Bienstock, D.},
pages={1-46},
journal = {	arXiv:2604.02493}, 
year = {2026},
}

@book{kleinrock75,
    author = {Kleinrock, L},
    title = {{Queueing Systems, Volume 1}}, 
     publisher = {John Wiley \& Sons, Inc.},
    year = {1975}
}

@article{kelley60,
    author = {Kelley, J. E. Jr},
    title = {{The Cutting-Plane Method for Solving Convex Programs}},
    journal = {Journal of the Society for Industrial and Applied Mathematics},
    vol = {8},
    pages ={703-712},
    year = {1960}
}

@article{fratta1973flow,
  title={The flow deviation method: An approach to store-and-forward communication network design},
  author={Fratta, L. and Gerla, M. and Kleinrock, L.},
  journal={Networks},
  volume={3},
  number={2},
  pages={97--133},
  year={1973},
  publisher = {Wiley-Blackwell},
  }

@article{ShahrokhiMatula,
  title={The maximum concurrent flow problem},
  author={Farhad Shahrokhi and David W. Matula},
  journal={J. ACM},
  year={1990},
  volume={37},
  pages={318-334},
  url={https://api.semanticscholar.org/CorpusID:4469579}
}

@inproceedings{PlotkinST91,
  author       = {Serge A. Plotkin and David B. Shmoys and {\'{E}}va Tardos},
  title        = {Fast approximation algorithms for fractional packing and covering problems},
  booktitle    = {32nd Annual Symposium on Foundations of Computer Science, San Juan, Puerto Rico, 1-4 October 1991},
  pages        = {495--504},
  publisher    = {{IEEE} Computer Society},
  year         = {1991},
  doi          = {10.1109/SFCS.1991.185411}
}

@article{grigoriadis1994fast,
  title={Fast approximation schemes for convex programs with many blocks and coupling constraints},
  author={Grigoriadis, Michael D and Khachiyan, Leonid G},
  journal={SIAM Journal on Optimization},
  volume={4},
  number={1},
  pages={86--107},
  year={1994},
  publisher={SIAM}
}

@inproceedings{BienstockIyengar2004,
  author    = {Daniel Bienstock and Garud Iyengar},
  title     = {Solving fractional packing problems in {$\mathcal{O}^*(1/\varepsilon)$} iterations},
  booktitle = {Proceedings of the Thirty-Sixth Annual {ACM} Symposium on Theory of Computing},
  series    = {STOC '04},
  pages     = {146--155},
  year      = {2004},
  publisher = {Association for Computing Machinery},
  address   = {New York, NY, USA},
  doi       = {10.1145/1007352.1007376}
}

@article{bienstock2010n,
  title={The {N-K} problem in power grids: New models, formulations, and numerical experiments},
  author={Bienstock, Daniel and Verma, Abhinav},
  journal={SIAM Journal on Optimization},
  volume={20},
  number={5},
  pages={2352--2380},
  year={2010},
  publisher={SIAM}
}

@article{Cormican1998StochasticNI,
  title = {Stochastic Network Interdiction},
  author = {Kelly J. Cormican and David P. Morton and R. Kevin Wood},
  journal = {Operations Research},
  year = {1998},
  volume = {46},
  number = {2},
  pages = {184--197}, 
}

@ARTICLE{swb2,
author = {J. Salmer\'on and K. Wood and R. Baldick},
title = {{Worst-Case Interdiction Analysis of Large-Scale Electric Power Grids}},
journal = {IEEE Trans. Power Systems},
volume = {24},
year = {2009},
institution = {{IEEE}},
pages = {96--104},
}

@article{zeiler2012adadelta,
  title={{ADADELTA}: An Adaptive Learning Rate Method},
  author={Zeiler, Matthew D.},
  journal={arXiv preprint arXiv:1212.5701},
  year={2012}
}

@article{bertsimas2004price,
  title={The Price of Robustness},
  author={Bertsimas, Dimitris and Sim, Melvyn},
  journal={Operations Research},
  volume={52},
  number={1},
  pages={35--53},
  year={2004},
  publisher={INFORMS} 
}

@article{bertsimas2011theory,
  title={Theory and applications of robust optimization},
  author={Bertsimas, Dimitris and Brown, David B and Caramanis, Constantine},
  journal={SIAM Review},
  volume={53},
  number={3},
  pages={464--501},
  year={2011},
  publisher={SIAM}
}

@article{brown2006defending,
  title={Defending critical infrastructure},
  author={Brown, Gerald and Carlyle, Matthew and Salmer{\'o}n, Javier and Wood, Kevin},
  journal={Interfaces},
  volume={36},
  number={6},
  pages={530--544},
  year={2006},
  publisher={INFORMS}
}

@article{qplib,
title = "QPLIB: a library of quadratic programming instances",
author = "Fabio Furini and Emiliano Traversi and Pietro Belotti and Antonio Frangioni and Ambros Gleixner and Nick Gould and Leo Liberti and Andrea Lodi and Ruth Misener and Hans Mittelmann and Sahinidis, {Nikolaos V.} and Stefan Vigerske and Angelika Wiegele",
note = "Publisher Copyright: {\textcopyright} 2018, Springer-Verlag GmbH Germany, part of Springer Nature and The Mathematical Programming Society.",
year = "2019",
month = jun,
day = "1",
doi = "10.1007/s12532-018-0147-4",
language = "English (US)",
volume = "11",
pages = "237--265",
journal = "Mathematical Programming Computation",
issn = "1867-2949",
publisher = "Springer Verlag",
number = "2",
}

@article{benders1962,
  title   = {Partitioning procedures for solving mixed-variables programming problems},
  author  = {Benders, J. F.},
  journal = {Numerische Mathematik},
  volume  = {4},
  number  = {1},
  pages   = {238--252},
  year    = {1962},
  doi     = {10.1007/BF01386316},
  url     = {https://doi.org/10.1007/BF01386316}
}

@book{nocedal2006numerical,
    title = {Numerical Optimization},
    author = {Nocedal, J. and Wright, S.},
    isbn = {9780387400655},
    lccn = {2006923897},
    series = {Springer Series in Operations Research and Financial Engineering},
    year = {2006},
    publisher = {Springer},
    edition = {2} 
}

@book{bienstock2006potential,
  title={Potential Function Methods for Approximately Solving Linear Programming Problems: Theory and Practice},
  author={Bienstock, Daniel},
  volume={53},
  year={2006},
  publisher={Springer Science \& Business Media}
}

@article{blake2,
title = {Polishing solutions to nonlinear optimization problems},
author={Sisson, B. and Bienstock, D.},
year={2026},
journal={forthcoming}
}

@misc{randconcentration,
  author    = {{Martin, B. and Willis, H. and Strong, A.}},
  title     = {{Managing Systemic Supply Chain Risk to the U.S. Economy from Trade Concentration and Geopolitical Conflict: The Roles of Insurance and Other Hedging Strategies} },
  year      = {2026},
  url       ={https://www.rand.org/pubs/research_reports/RRA4291-1.html},
  urldate   = {2026-02-18}
}

@book{schmidt2026,
title = {{Linear and Mixed-Integer Bilevel
Optimization: Theory and Algorithms}},
author={Beck, Y. and Ljubi\'{c}, I and Schmidt, M.},
year = {2026},
publisher = {{Cambridge University Press}},
url={https://yasminebeck.github.io/files/bilevel-optimization-cup.pdf}
}

\end{document}